\title{Braid stability for periodic orbits of area-preserving surface diffeomorphisms}
\author{Michael Hutchings}
\date{}
\newcommand{\mc}[1]{{\mathcal #1}}
\numberwithin{equation}{section}
\newtheorem{theorem}{Theorem}[section]
\newtheorem{proposition}[theorem]{Proposition}
\newtheorem{corollary}[theorem]{Corollary}
\newtheorem{lemma}[theorem]{Lemma}
\newtheorem{lemma-definition}[theorem]{Lemma-Definition}
\theoremstyle{definition}
\newtheorem{definition}[theorem]{Definition}
\newtheorem{remark}[theorem]{Remark}
\newtheorem{example}[theorem]{Example}
\newcommand{\floor}[1]{\left\lfloor #1 \right\rfloor}
\newcommand{\R}{{\mathbb R}}
\newcommand{\Z}{{\mathbb Z}}
\newcommand{\op}{\operatorname}
\newcommand{\M}{\mc{M}}
\newcommand{\union}{\bigcup}
\newcommand{\bpm}{\begin{pmatrix}}
\newcommand{\epm}{\end{pmatrix}}
\renewcommand{\epsilon}{\varepsilon}
\begin{document}

\maketitle

\begin{abstract}
We consider an area-preserving diffeomorphism of a compact surface, which is assumed to be an irrational rotation near each boundary component. A finite set of periodic orbits of the diffeomorphism gives rise to a braid in the mapping torus. We show that under some nondegeneracy hypotheses, the isotopy classes of braids that arise from finite sets of periodic orbits are stable under Hamiltonian perturbations that are small with respect to the Hofer metric. A corollary is that for a Hamiltonian isotopy class of such maps, the topological entropy is lower semicontinuous with respect to the Hofer metric. This extends  results of Alves-Meiwes for braids arising from finite sets of fixed points of Hamiltonian surface diffeomorphisms. 
\end{abstract}

\setcounter{tocdepth}{2}

\section{Introduction}
\label{sec:intro}

Given a vector field on a three-manifold, one can consider the knot types of its periodic orbits, and the link types of finite sets of periodic orbits. How do these knot and link types depend on the vector field? In this paper we consider the following special case of this question.

\subsection{Statement of the main result}
\label{sec:statement}

Let $\Sigma$ be a compact
surface, possibly with boundary, and let $\omega$ be a symplectic (area) form on $\Sigma$. Let $\phi:\Sigma\to\Sigma$ be an area-preserving diffeomorphism. A basic dynamical question is to understand the periodic orbits of $\phi$ and their properties. Our convention is that a {\em periodic orbit\/} of $\phi$ is a cycle of $k$ points $x_1,\ldots,x_k\in\Sigma$ for some positive integer $k$, called the {\em period\/}, such that $\phi(x_i)=x_{i+1\mod k}$ for each $i$. We say that the periodic orbit is {\em simple\/} if the points $x_1,\ldots, x_k$ are distinct; otherwise we say that it is {\em multiply covered\/}. Below, all periodic orbits are understood to be simple unless stated otherwise.

In this paper we are concerned with topological properties of the periodic orbits, namely braid types of periodic orbits or finite sets of periodic orbits. To be more precise, recall that the {\em mapping torus\/} of $\phi$ is the three-manifold $Y_\phi$ defined by
\[
Y_\phi = [0,1]\times\Sigma/\sim,\quad (1,x)\sim(0,\phi(x)).
\]
This is a fiber bundle over $S^1=\R/\Z$ with fiber $\Sigma$. We denote the $S^1$ coordinate by $t$. There is a canonical ``Reeb'' vector field $\partial_t$ on $Y_\phi$ which increases the $S^1$ coordinate. A simple periodic orbit of the map $\phi$ with period $k$ is equivalent to a simple (i.e.\ embedded) periodic orbit of the vector field $\partial_t$ whose projection to $S^1$ has degree $k$.

\begin{definition}
A {\em braid\/} in the mapping torus $Y_\phi$ is a link $\zeta$ in $Y_\phi$ which is transverse to the fibers of $Y_\phi\to S^1$. Two braids $\zeta_0,\zeta_1$ in $Y_\phi$ are {\em isotopic\/} if they are the endpoints of a smooth one-parameter family of braids $\{\zeta_t\}|_{t\in[0,1]}$. We denote the isotopy class of the braid $\zeta$ by $[\zeta]$.
\end{definition}

If $\alpha$ is a finite set of periodic orbits of the map $\phi$, regarded as periodic orbits of the vector field $\partial_t$, then $\alpha$ gives rise to a braid in $Y_\phi$, which we denote by $\zeta(\alpha)$. We say that the {\em braid type\/} of the set of periodic orbits $\alpha$ is the isotopy class $[\zeta(\alpha)]$.

\begin{definition}
Let $\mc{B}(\phi)$ denote the set of isotopy classes of braids $[\zeta(\alpha)]$ in $Y_\phi$ that arise from finite sets $\alpha$ of periodic orbits of $\phi$.
\end{definition}

We are interested in how the set $\mc{B}(\phi)$ of braid types behaves under Hamiltonian isotopy of $\phi$. To formulate this question more precisely, it is convenient to describe a Hamiltonian isotopy as in \cite[\S3]{pfh4}, as follows. We start with a smooth function
\[
H: Y_\phi \longrightarrow \R.
\]
Under the quotient map $[0,1]\times\Sigma\to Y_\phi$, the function $H$ pulls back to a function
\[
\widetilde{H}:[0,1]\times\Sigma\to\R
\]
satisfying $\widetilde{H}(1,x) = \widetilde{H}(0,\phi(x))$. For $t\in[0,1]$, let $H_t=\widetilde{H}(t,\cdot):\Sigma\to\R$. Let $X_{H_t}$ denote the associated Hamiltonian vector field on $\Sigma$, defined by the convention
\begin{equation}
\label{eqn:Hamiltonianconvention}
\omega(X_{H_t},\cdot)=dH_t.
\end{equation}
We always assume that $H_t$ is locally constant near $\partial\Sigma$ for each $t$, so that $X_{H_t}$ vanishes near $\partial\Sigma$. Let $\{\varphi_t:\Sigma\to\Sigma\}_{t\in[0,1]}$ denote the Hamiltonian isotopy defined by $\varphi_0=\op{id}_\Sigma$ and $\partial_t\varphi_t=X_{H_t}\circ\varphi_t$. We define $\phi_H=\phi\circ\varphi_1$.

There is a diffeomorphism of mapping tori
\begin{equation}
\label{eqn:fh}
f_H:Y_\phi\stackrel{\simeq}{\longrightarrow} Y_{\phi_H}
\end{equation}
induced by the diffeomorphism of $[0,1]\times\Sigma$ sending
\[
(t,x) \longmapsto (t,\varphi_t^{-1}(x)).
\]
One can now ask: How does the set of braid types $\mc{B}(\phi)$ relate to the set of braid types $\mc{B}(\phi_H)$ under the diffeomorphism $f_H$?

We will consider Hamiltonian perturbations for which $H$ is sufficiently small with respect to the following norm.

\begin{definition}
\label{def:Hoferenergy}
Given $H:Y_\phi\to\R$, define
\[
\|H\| = \max_{Y_\phi} H - \min_{Y_\phi} H.
\]
\end{definition}

\begin{remark}
It is more standard to measure the size of $H$ as
\begin{equation}
\label{eqn:hofer}
\|H\|' = \int_{0}^1\left(\max_{\Sigma}H_t - \min_{\Sigma} H_t\right)dt.
\end{equation}
For $\psi$ Hamiltonian isotopic to $\phi$, one then defines the {\em Hofer distance\/} by
\begin{equation}
\label{eqn:Hofer}
d_{\op{Hofer}}(\phi,\psi)=\inf\{\|H\|'\mid \phi_H=\psi\}.
\end{equation}
It is shown in \cite{hofer} that $d_{\op{Hofer}}$ is a metric on the set of maps in a given Hamiltonian isotopy class; the nontrivial part of this statement is that $d_{\op{Hofer}}(\phi,\psi)=0$ implies that $\phi=\psi$. Note that one obtains the same distance if one replaces $\|H\|'$ by $\|H\|$ in \eqref{eqn:Hofer}, because one can reparametrize a Hamiltonian isotopy in time to level out the integrand in \eqref{eqn:hofer}.
\end{remark}

To state the main result, we need to introduce some hypotheses.

\begin{definition}
Let $\phi:\Sigma\to\Sigma$ be an area-preserving diffeomorphism. We say that $\phi$ is {\em boundary-admissible\/} if:
\begin{itemize}
\item
$\phi$ sends each component of $\partial\Sigma$ to itself.
\item
For each component of $\partial\Sigma$, there is an irrational number $\theta$ and a neighborhood of the component identified with $(-\epsilon,0]\times (\R/\Z)$ for some $\epsilon>0$, on which
\begin{equation}
\label{eqn:phiboundary}
\phi(x,y) = (x,y+\theta).
\end{equation}
\end{itemize}
\end{definition}

\begin{definition}
Given a periodic orbit $\gamma=(x_1,\ldots,x_k)$ of $\phi$, consider the ``linearized return map''
\begin{equation}
\label{eqn:linretmap}
d\phi^k:T_{x_i}\Sigma \longrightarrow T_{x_i}\Sigma.
\end{equation}
We say that the periodic orbit $\gamma$ is {\em nondegenerate\/} if $1-d\phi^k:T_{x_i}\Sigma\to T_{x_i}\Sigma$ is invertible for each $i$ (or equivalently for any $i$). We say that the map $\phi$ is {\em nondegenerate\/} if all of its periodic orbits (including multiple covers) are nondegenerate.
\end{definition}

A standard argument shows that within any Hamiltonian isotopy class of boundary-admissible area-preserving surface diffeomorphisms, a $C^\infty$-generic diffeomorphism is nondegenerate.

We can now state the main result. Roughly speaking, this asserts that nondegenerate braid types of boundary-admissible area-preserving diffeomorphisms are stable under Hamiltonian isotopies that are small with respect to the Hofer metric.

\begin{theorem}
\label{thm:main}
Let $\Sigma$ be a compact surface with a symplectic form $\omega$. Let $\phi:\Sigma\to\Sigma$ be a boundary-admissible area-preserving diffeomorphism. Let $\alpha$ be a finite set of nondegenerate simple periodic orbits of $\phi$. Then there exists $\delta>0$ such that the following holds: Let $H:Y_\phi\to\R$ be a smooth function such that $H_t$ is locally constant near $\partial\Sigma$ for each $t$, the map $\phi_H$ is nondegenerate, and $\|H\|<\delta$. Then
\begin{equation}
\label{eqn:conclusion}
f_H([\zeta(\alpha)]) \in \mc{B}(\phi_H).
\end{equation}
\end{theorem}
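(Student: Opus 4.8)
The plan is to realize the braid type $[\zeta(\alpha)]$ via a count of pseudoholomorphic curves in the symplectization of the mapping torus that is invariant under Hamiltonian perturbation, in the spirit of periodic Floer homology (PFH) or embedded contact homology. The point is that $\mc{B}(\phi)$ is a topological invariant, so to show that a perturbed braid type still arises from periodic orbits of $\phi_H$, I want to detect the orbits making up $\alpha$ (or orbits braided in the same way) by a Floer-theoretic invariant that does not change under small Hamiltonian isotopy. Concretely, the set $\alpha$ of nondegenerate orbits defines a PFH generator in the homology class $[\alpha]\in H_1(Y_\phi)$ determined by its total degree over $S^1$; I would like to show that the PFH chain complex in this class, or more precisely a version of it that remembers the braid type, is nontrivial and that the distinguished generator (or some generator with the same braid type) survives.

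First I would set up the geometry: choose a stable Hamiltonian structure on $Y_\phi$ adapted to $\phi$ and, via the diffeomorphism $f_H$ of \eqref{eqn:fh}, transport everything to $Y_{\phi_H}$; because $\phi$ and $\phi_H$ agree near $\partial\Sigma$ and there $\phi$ is an irrational rotation with no periodic orbits, all periodic orbits of both maps lie in a fixed compact subsurface, and standard arguments give the required $C^0$ and symplectic-area control. Second, I would invoke (or reprove in this setting) the basic compactness and energy estimates for holomorphic curves interpolating between $\phi$ and $\phi_H$: a Hamiltonian isotopy with $\|H\|<\delta$ produces a cobordism whose curves have energy bounded in terms of $\delta$, so for $\delta$ small no curve can ``escape'' and change the relevant count. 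Third, I would use this to produce, from the orbit set $\alpha$ for $\phi$, an orbit set $\alpha'$ for $\phi_H$ in the same homology class and with $f_H([\zeta(\alpha)]) = [\zeta(\alpha')]$; the braid type is preserved because the interpolating curves, being embedded and of controlled area, trace out an isotopy of braids rather than a more violent degeneration (here one uses that $H$-small isotopies move orbits by a small amount in an appropriate sense, together with the transversality-to-fibers condition defining a braid).

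The main obstacle, and where most of the work will go, is the third step: ensuring that the perturbed orbit set $\alpha'$ has the \emph{same} braid type as $f_H(\alpha)$ rather than merely living in the same homology class. Homology classes are far too coarse to see braiding, so I cannot simply quote an isomorphism of PFH groups. Instead I expect to need a quantitative statement: if $\|H\|<\delta$, then every orbit of $\phi_H$ in the relevant homology class lies within a $C^0$-neighborhood (in $Y_{\phi_H}$, pulled back to $Y_\phi$) of the orbits of $\phi$, small enough that the obvious linear isotopy stays transverse to the fibers and hence defines an isotopy of braids. Making this precise requires controlling not just where the orbits are but how they wind, which is why the nondegeneracy hypothesis on both $\phi$ and $\phi_H$ is essential: nondegeneracy isolates the orbits and lets one use an implicit-function/continuation argument to match orbits of $\phi$ with nearby orbits of $\phi_H$ along the Hamiltonian isotopy, provided $\delta$ is chosen after $\alpha$ (as in the statement) so that the continuation does not hit a bifurcation. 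The remaining steps — compactness, energy bounds, and the topological conclusion that a $C^0$-small, fiber-transverse perturbation of a braid is isotopic to it — are comparatively routine.
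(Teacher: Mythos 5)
Your overall framework (a cobordism between $\phi$ and $\phi_H$ with energy controlled by $\|H\|$, holomorphic curves interpolating between the orbit set $\alpha$ and an orbit set of $\phi_H$) is in the right spirit, but the step you identify as the crux is resolved by an argument that does not work. You propose to prove that for $\|H\|<\delta$ every relevant orbit of $\phi_H$ lies in a $C^0$-small neighborhood of the orbits of $\phi$, so that a ``linear isotopy'' stays transverse to the fibers, backed by an implicit-function/continuation matching of orbits along the isotopy. This is exactly the fallacy addressed in Remark~\ref{rem:classical}: the Hofer-type norm $\|H\|$ controls only the values of $H$, while the Hamiltonian vector field, and hence the isotopy from $\phi$ to $\phi_H$, depends on $dH$. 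A Hofer-small $H$ can have huge derivatives, $\phi_H$ need not be $C^0$-close to $\phi$, and its periodic orbits need not be anywhere near those of $\phi$; the continuation can perfectly well undergo bifurcations. If such $C^0$-closeness held, the theorem would already follow from classical fixed point theory and no Floer-theoretic machinery would be needed. In the actual proof the isotopy of braids is not produced by nearness of orbits at all: it is the holomorphic cylinder itself, viewed as a movie of loops, which is transverse to the fibers because the projection to $\R\times S^1$ is holomorphic and a somewhere injective cylinder has no branch points (Lemma~\ref{lem:carlemannew}(a)), and embedded by the index inequality (Proposition~\ref{prop:ie2new}).

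The second, and equally essential, missing ingredient is the one your proposal never mentions: ruling out \emph{multiply covered} periodic orbits of $\phi_H$ in the limit. When one stretches the neck in the cobordism from $\phi$ to itself (through $H$), the moduli space of index-zero cylinders from $\alpha$ to $\alpha$, which consists of exactly the trivial cylinders and hence has odd count, breaks along orbit sets of $\phi_H$; a priori these orbit sets can carry multiplicities, in which case they do not define a braid of simple orbits and the conclusion fails. The paper's key step (Lemma~\ref{lem:key}) shows that breakings along non-simple orbit sets contribute evenly: elliptic orbits cannot appear with multiplicity $>1$ because the positive and negative ECH partitions are then disjoint (Lemma~\ref{lem:partitionsdisjoint}) while a limit of cylinders forces them to agree, and gluing along hyperbolic orbits of multiplicity $m>1$ comes in $m!$ or $2^{\lfloor m/2\rfloor}\lfloor m/2\rfloor!$ ways, an even number. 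Only then does the parity argument produce a \emph{simple} orbit set $\beta$ of $\phi_H$ joined to $\alpha$ by embedded cylinders. You also need the quantitative isolation statement (strong $\epsilon$-isolation, Lemma~\ref{lem:isolation}) and the choice $\delta=\epsilon/d$ to prevent additional breaking in the symplectization ends; treating these compactness and counting issues as ``comparatively routine'' while resting the braid-type identification on a false $C^0$-closeness claim inverts where the difficulty actually lies.
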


\begin{remark}[classical fixed point theory]
\label{rem:classical}
If we replaced the condition $\|H\|<\delta$ in Theorem~\ref{thm:main} with the condition that $\phi_H$ is sufficiently $C^0$-close to $\phi$, then the conclusion \eqref{eqn:conclusion} would follow from classical fixed point theory. However proving Theorem~\ref{thm:main} is harder than this, because the assumption that $\|H\|$ is small does not imply that $\phi_H$ is $C^0$-close to $\phi$, since by equation \eqref{eqn:Hamiltonianconvention}, the isotopy from $\phi$ to $\phi_H$ depends on the first derivatives of $H$.
\end{remark}

\begin{remark}[hypotheses]
The hypothesis in Theorem~\ref{thm:main} that the periodic orbits in $\alpha$ are nondegenerate cannot be dropped completely; otherwise there could be a degenerate orbit coming from a birth-death bifurcation which can be destroyed by an arbitrarily $C^\infty$-small perturbation. (However extension to some degenerate cases is possible; see \cite{khanevsky}.) If we drop the hypothesis that $\phi_H$ is nondegenerate, then the most we can get (by taking a limit of nondegenerate Hamiltonian perturbations of $\phi_H$) is a ``weighted braid'' in $Y_{\phi_H}$; cf.\ \cite[\S3.1]{cghhl}. The hypothesis that $\phi$ is boundary-admissible can probably be weakened. We do not know what can be said about braid stability (beyond classical fixed point theory as in Remark~\ref{rem:classical}) for non-Hamiltonian isotopies, or more generally for surface diffeomorphisms that are not area-preserving.
\end{remark}

\begin{remark}[previous work]
\label{rem:am}
Alves-Meiwes \cite[Thm.\ 2.5]{am} proved a result similar to Theorem~\ref{thm:main} in which $\Sigma$ is closed or a disk, $\phi$ is Hamiltonian isotopic to the identity, and the periodic orbits in $\alpha$ are fixed points. Their proof uses holomorphic cylinders, provided by continuation maps on Hamiltonian Floer theory in an action window. Some precedents for this, detecting periodic orbits but not the braid type, are the work of Polterovich-Shelukhin \cite{ps} proving stability of barcodes in Hamiltonian Floer homology under the Hofer metric, and the earlier paper of Cieliebak-Floer-Hofer-Wysocki \cite{cfhw} establishing $C^0$-stability of the action spectrum of fillable contact manifolds.
\end{remark}

\begin{remark}[method of proof]
The proof of Theorem~\ref{thm:main} will find the required braid of periodic orbits of $\phi_H$, and the required isotopy with $\zeta(\alpha)$, also using holomorphic cylinders.
We will work directly with the holomorphic curves we need, without placing the discussion into the framework of maps on Floer theory. The idea is to consider holomorphic cylinders in symplectic cobordisms between $Y_\phi$ and itself. We start with ``trivial cylinders'' in the ``trivial cobordism'', and deform this cobordism via a kind of ``neck stretching'' into a composition of cobordisms between $Y_\phi$ and $Y_{\phi_H}$. The holomorphic cylinders will then break along sets of periodic orbits of $\phi_H$ in $Y_{\phi_H}$. The main issue is to avoid multiply covered periodic orbits of $\phi_H$. We will analyze how breaking can occur using some topological facts about holomorphic curves that enter into the construction of periodic Floer homology (PFH) of area-preserving surface diffeomorphisms, and its sister theory, embedded contact homology (ECH) of contact three-manifolds; see e.g.\ \cite{bn}. A key ingredient is to use ECH partition conditions to analyze the multiplicities of periodic orbits that arise in breakings. A parity argument will show that at least one such breaking gives us a set of simple periodic orbits of $\phi_H$. The holomorphic cylinders will then give the desired isotopy of braids.
\end{remark}

Theorem~\ref{thm:main} has the following application to the topological entropy $h_{\op{top}}$ of area-preserving surface diffeomorphisms; see e.g.\ \cite[\S1.4]{am} for a review of topological entropy.

\begin{corollary}
The topological entropy of boundary-admissible area-preserving surface diffeomorphisms in a given Hamiltonian isotopy class is lower semicontinuous with respect to the Hofer metric.
\end{corollary}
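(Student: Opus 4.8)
The plan is to combine Theorem~\ref{thm:main} with the classical relationship between topological entropy and braid forcing for surface diffeomorphisms, and then to remove a nondegeneracy hypothesis on the target by a limiting argument. Recall that an isotopy class of braids $\beta$ transverse to the fibers of a surface bundle over $S^1$ has a well-defined \emph{forced entropy} $h_{\op{forced}}(\beta)\ge 0$: picking any flow transverse to the fibers that contains a representative of $\beta$ as a union of periodic orbits, $h_{\op{forced}}(\beta)$ is the topological entropy of the Thurston--Nielsen canonical form of the first-return map on a fiber relative to its intersection points with $\beta$. Since any two such flows yield conjugate relative mapping classes, $h_{\op{forced}}(\beta)$ depends only on the pair (fibered $3$-manifold, isotopy class of transverse braid); in particular, because the diffeomorphism $f_H$ of \eqref{eqn:fh} preserves the fibration over $S^1$, we have $h_{\op{forced}}(f_H(\beta))=h_{\op{forced}}(\beta)$. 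The two standard inputs from surface dynamics that I would invoke are: (i) if $\psi:\Sigma\to\Sigma$ has a finite set $\alpha$ of periodic orbits, then $h_{\op{top}}(\psi)\ge h_{\op{forced}}([\zeta(\alpha)])$, since $\psi$ is among the maps isotopic to $\psi$ rel the orbit points; and (ii) if $\psi$ is $C^\infty$ and $c<h_{\op{top}}(\psi)$, then by Katok's horseshoe theorem there is a hyperbolic basic set $\Lambda$ for $\psi$ with $h_{\op{top}}(\psi|_\Lambda)>c$, and braid-forcing theory then supplies a finite set $\alpha$ of periodic orbits contained in $\Lambda$ — automatically hyperbolic, hence nondegenerate, and simple — with $h_{\op{forced}}([\zeta(\alpha)])>c$. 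See \cite[\S1.4]{am} and its references for both.

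Granting these, the argument runs as follows. Fix a boundary-admissible area-preserving $\psi_0$ in the given Hamiltonian isotopy class and let $c<h_{\op{top}}(\psi_0)$ be arbitrary (if $h_{\op{top}}(\psi_0)=0$ there is nothing to prove). By (ii), choose a finite set $\alpha$ of nondegenerate simple periodic orbits of $\psi_0$ with $h_{\op{forced}}([\zeta(\alpha)])>c$, and apply Theorem~\ref{thm:main} to $\psi_0$ and $\alpha$ to obtain $\delta>0$ with the stated property. Now suppose $H:Y_{\psi_0}\to\R$ has $H_t$ locally constant near $\partial\Sigma$, $(\psi_0)_H$ nondegenerate, and $\|H\|<\delta$; then $f_H([\zeta(\alpha)])\in\mc{B}((\psi_0)_H)$, so $(\psi_0)_H$ has a finite set of periodic orbits with this braid type, and (i) together with the $f_H$-invariance of $h_{\op{forced}}$ gives $h_{\op{top}}((\psi_0)_H)\ge h_{\op{forced}}(f_H([\zeta(\alpha)]))=h_{\op{forced}}([\zeta(\alpha)])>c$. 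Since any $\psi$ in the class with $d_{\op{Hofer}}(\psi_0,\psi)<\delta$ can be written as $\psi=(\psi_0)_H$ with $H_t$ locally constant near $\partial\Sigma$ and $\|H\|<\delta$ (recall that $\|H\|$ and $\|H\|'$ define the same Hofer distance), we conclude that every \emph{nondegenerate} $\psi$ in the class with $d_{\op{Hofer}}(\psi_0,\psi)<\delta$ satisfies $h_{\op{top}}(\psi)>c$.

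It remains to drop nondegeneracy of the target. Let $\psi$ be arbitrary in the class with $d_{\op{Hofer}}(\psi_0,\psi)<\delta/2$. A generic small Hamiltonian perturbation of $\psi$ is nondegenerate, remains boundary-admissible (it agrees with $\psi$ near $\partial\Sigma$), and is Hofer-close to $\psi$; so choose nondegenerate $\psi_n$ in the class with $\psi_n\to\psi$ in $C^\infty$ and $d_{\op{Hofer}}(\psi,\psi_n)\to 0$. By the triangle inequality $d_{\op{Hofer}}(\psi_0,\psi_n)<\delta$ for $n$ large, hence $h_{\op{top}}(\psi_n)>c$ by the previous paragraph. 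By Newhouse's theorem (via Yomdin's estimates) that topological entropy is upper semicontinuous in the $C^\infty$ topology for surface diffeomorphisms — applied after doubling $\Sigma$ along $\partial\Sigma$ and extending by the prescribed boundary rotations, which preserves both the entropies and $C^\infty$-convergence — we get $h_{\op{top}}(\psi)\ge\limsup_n h_{\op{top}}(\psi_n)\ge c$. As $c<h_{\op{top}}(\psi_0)$ and $\psi_0$ were arbitrary, $h_{\op{top}}$ is lower semicontinuous with respect to the Hofer metric.

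I expect the only genuine subtlety to be input (ii): one must know that a $C^\infty$ surface diffeomorphism of positive topological entropy carries, just below every level less than its entropy, a \emph{finite} set of hyperbolic periodic orbits whose braid type in the mapping torus forces entropy above that level. This is where Katok's theorem enters, together with the fact that the periodic orbits of a horseshoe of entropy $>c$ already braid — in the suspension — with pseudo-Anosov first-return data of dilatation exceeding $e^{c}$ once sufficiently many of them (up to a large period) are included; I would cite the braid-forcing literature for this rather than reprove it. Everything else is a formal assembly of Theorem~\ref{thm:main}, the easy forcing inequality, the naturality of $h_{\op{forced}}$ under the fibration-preserving diffeomorphism $f_H$, and Newhouse's upper semicontinuity.
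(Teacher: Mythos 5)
Your proposal is correct and follows essentially the same route as the paper: both arguments combine Theorem~\ref{thm:main} with the two braid-forcing inequalities of \cite[\S1.4]{am} (your inputs (i) and (ii), which are exactly the two halves of the paper's equation relating $h_{\op{top}}(\phi)$ to suprema over $\mc{B}_h(\phi)$ and $\mc{B}(\phi)$), the invariance of the braid entropy under the fibration-preserving map $f_H$, and Newhouse--Yomdin $C^\infty$ (semi)continuity of entropy to reduce to nondegenerate targets. The extra details you supply (Thurston--Nielsen formulation of forced entropy, the $\delta/2$ triangle-inequality bookkeeping, doubling along $\partial\Sigma$) are consistent with, and merely expand on, what the paper delegates to its citations.
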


\begin{proof}
Let $\phi$ be a boundary-admissible area-preserving surface diffeomorphism. Given $\epsilon>0$, we need to show that there exists $\delta>0$ such that that if $H:Y_\phi\to\R$ is a Hamiltonian such that $H_t$ is locally constant near $\partial \Sigma$ for each $t$ and $\|H\|<\delta$, then
\begin{equation}
\label{eqn:htop0}
h_{\op{top}}(\phi_H) > h_{\op{top}}(\phi) - \epsilon.
\end{equation}
We can assume that $h_{\op{top}}(\phi)>0$, as otherwise \eqref{eqn:htop0} holds automatically.

Let $\mc{B}_h(\phi)$ denote the set of braid types in $\mc{B}(\phi)$ that can be represented by a finite set of hyperbolic (in particular nondegenerate) periodic orbits; see \S\ref{sec:orbitsets} below for the definition. It is shown in \cite[\S1.4]{am}, using methods from \cite{bowen,fh,katok}, that under our assumption that $h_{\op{top}}(\phi)>0$, we have
\begin{equation}
\label{eqn:htop}
\sup_{\zeta\in\mc{B}_h(\phi)} h_{\op{top}}(\zeta) = h_{\op{top}}(\phi) = \sup_{\zeta\in\mc{B}(\phi)} h_{\op{top}}(\zeta).
\end{equation}
Here if $\zeta\in\mc{B}(\phi)$, the number $h_{\op{top}}(\zeta)$ is defined in terms of the growth rate of the action of $\phi^n$ on $\pi_1$ of the complement in $\Sigma$ of the set of periodic points corresponding to $\zeta$. (In fact, a version of \eqref{eqn:htop} holds for any diffeomorphism of a compact surface.)

As noted in \cite[\S1.4]{am}, it follows from the combination of \cite{newhouse,yomdin} that the topological entropy of area-preserving diffeomorphisms is continuous with respect to the $C^\infty$ topology. Consequently, in proving \eqref{eqn:htop0}, we can restrict attention to the case when $\phi_H$ is nondegenerate. By the first part of \eqref{eqn:htop}, we can find $\zeta\in\mc{B}_h(\phi)$ such that
\begin{equation}
\label{eqn:htop1}
h_{\op{top}}(\zeta) > h_{\op{top}}(\phi) - \epsilon.
\end{equation}

Now Theorem~\ref{thm:main} provides a number $\delta>0$ such that if $H_t$ is locally constant near $\partial\Sigma$ for each $t$ and $\|H\|<\delta$ and $\phi_H$ is nondegenerate, then there exists $\zeta'\in\mc{B}(\phi_H)$ which is isotopic to $f_H(\zeta)$. It follows from this isotopy that
\begin{equation}
\label{eqn:htop2}
h_{\op{top}}(\zeta') = h_{\op{top}}(\zeta).
\end{equation}
By the second part of \eqref{eqn:htop}, we have
\begin{equation}
\label{eqn:htop3}
h_{\op{top}}(\phi_H) \ge h_{\op{top}}(\zeta').
\end{equation}
Combining \eqref{eqn:htop1}, \eqref{eqn:htop2}, and \eqref{eqn:htop3} proves \eqref{eqn:htop0}.
\end{proof}

\begin{remark}
One can ask if Theorem~\ref{thm:main} has an analogue for contact three-manifolds. For example, suppose $Y$ is a closed three-manifold, $\lambda$ is a contact form on $Y$ with associated contact structure $\xi = \op{Ker}(\lambda)$, and $\zeta\subset Y$ is a finite union of nondegenerate simple Reeb orbits of $\lambda$. Does there exist $\epsilon>0$ such that if $f:Y\to\R$ satisfies $|f|<\epsilon$, and the contact form $e^f\lambda$ is nondegenerate, then this contact form has a finite union of simple Reeb orbits which is isotopic to $\zeta$ as a transverse link in $(Y,\xi)$?
\end{remark}

\paragraph{Funding.} This work was partially supported by the National Science Foundation [DMS-2005437].

\paragraph{Acknowledgments.} The author is grateful to Marcelo Alves for a number of helpful comments on a draft of this paper and explanations of key points in \cite{am}. The author thanks Julian Chaidez and Yuan Yao for pointing out an error in a previous arXiv version of this paper. The author thanks the anonymous referee for suggesting several valuable corrections and clarifications.


\section{Preliminaries}

Fix a boundary-admissible area-preserving diffeomorphism $\phi$ of $(\Sigma,\omega)$. We now introduce some notions about holomorphic curves in $\R\times Y_\phi$ and related cobordisms that we will need.  Many of these notions are ingredients in the definition of periodic Floer homology (PFH), although we will not actually need to use PFH here. More details about these PFH ingredients may be found in the lecture notes \cite{bn} (which treat the analogous case of ECH of contact three-manifolds), and the paper \cite{ir} (which treats stable Hamiltonian structures, including both mapping tori of area-preserving surface diffeomorphisms and contact three-manifolds).

\subsection{Orbit sets and action differences}
\label{sec:orbitsets}

\begin{definition}
\label{def:orbitset}
An {\em orbit set\/} is a finite set of pairs $\alpha=\{(\alpha_i,m_i)\}$ where the $\alpha_i$ are distinct simple periodic orbits of $\phi$, and the $m_i$ are positive integers. We define the homology class of $\alpha$ by
\[
[\alpha]=\sum_im_i[\alpha_i]\in H_1(Y_\phi).
\]
We define the {\em degree\/} of $\alpha$ by $d=[\alpha]\cdot[\Sigma]\in\Z_{\ge 0}$, where $[\Sigma]$ denotes the homology class of a fiber of $Y_\phi\to S^1$. Equivalently, $d$ is the sum over $i$ of $m_i$ times the period of $\alpha_i$.
\end{definition}

\begin{definition}
\label{def:chains}
If $\alpha$ and $\beta$ are orbit sets with $[\alpha]=[\beta]\in H_1(Y_\phi)$, let $H_2(Y_\phi,\alpha,\beta)$ denote the set of $2$-chains $Z$ in $Y_\phi$ with $\partial Z = \alpha - \beta$, modulo boundaries of $3$-chains. The set $H_2(Y_\phi,\alpha,\beta)$ is an affine space over $H_2(Y_\phi)$.
\end{definition}

Below, let $\omega_\phi$ denote the unique $2$-form on the mapping torus $Y_\phi$ that restricts to the symplectic form $\omega$ on $\Sigma$ on each fiber of the projection $Y_\phi\to S^1$, and that annihilates the vector field $\partial_t$. Note that $d\omega_\phi=0$.

\begin{definition}
\label{def:ad}
If $\alpha$ and $\beta$ are orbit sets with $[\alpha]=[\beta]\in H_1(Y_\phi)$, and if $Z\in H_2(Y_\phi,\alpha,\beta)$, define the {\em action difference\/}
\[
\mc{A}(Z) = \int_Z\omega_\phi.
\]
\end{definition}

\begin{definition}
\label{def:simple}
The orbit set $\alpha=\{(\alpha_i,m_i)\}$ is {\em simple\/} if $m_i=1$ for all $i$. The orbit set $\alpha$ is {\em nondegenerate\/} if each periodic orbit $\alpha_i$, along with its multiple covers up to multiplicity $m_i$, is nondegenerate.
\end{definition}

\begin{definition}
Let $\gamma=(x_1,\ldots,x_k)$ be a nondegenerate periodic orbit of $\phi$. For each $i=1,\ldots,k$ we have the linearized return map \eqref{eqn:linretmap}, which is a symplectic map whose eigenvalues do not depend on $i$. We say that the periodic orbit $\gamma$ is {\em elliptic\/} if the eigenvalues are on the unit circle, {\em positive hyperbolic\/} if the eigenvalues are positive, and {\em negative hyperbolic\/} if the eigenvalues are negative.
\end{definition}

\begin{remark}
\label{rem:generators}
In the definition of PFH (which we will not need here), one ordinarily assumes that $\phi$ is nondegenerate\footnote{One can also generalize to Morse-Bott settings; for Morse-Bott ECH see \cite{cghy,nw,yuan}.}. The generators of the chain complex are then orbit sets $\alpha=\{(\alpha_i,m_i)\}$ such that $m_i=1$ whenever the orbit $\alpha_i$ is hyperbolic.
\end{remark}

\subsection{Holomorphic curves in $\R\times Y_\phi$}
\label{sec:curves}

Let $E\to Y_\phi$ denote the vertical tangent bundle of $Y_\phi\to S^1$.

\begin{definition}
\label{def:Jphi}
Let $\mc{J}(\phi)$ denote the set of almost complex structures $J$ on $\R\times Y_\phi$ such that:
\begin{itemize}
\item $J(\partial_s)=\partial_t$, where $s$ denotes the $\R$ coordinate on $\R\times Y$.
\item $J$ is invariant under translation in the $\R$ direction.
\item $J$ maps $E$ to itself, rotating positively with respect to $\omega$.
\item Near the boundary of $\R\times Y_\phi$, in the coordinates of \eqref{eqn:phiboundary}, we have
\begin{equation}
\label{eqn:Jboundary}
J\partial_x = \partial_y.
\end{equation}
\end{itemize}
\end{definition}

Fix $J\in\mc{J}(\phi)$. We consider $J$-holomorphic curves in $\R\times Y_\phi$, namely (nonconstant) holomorphic maps
\[
u: (C,j) \longrightarrow (\R\times Y_\phi,J),
\]
where $C$ is a (connected) compact Riemann surface with a finite number of punctures, modulo reparametrization by biholomorphisms of the domain. We assume that each puncture of $C$ is positively or negatively asymptotic to a (not necessarily simple) nondegenerate periodic orbit of $\phi$. Here if $\gamma$ is a periodic orbit of period $k$, regarded as a loop in $Y_\phi$, then ``positively asymptotic to $\gamma$'' means that there is a number $R>>0$, and a neighborhood of the puncture in $C$, holomorphically identified with $[R,\infty)\times S^1$, on which
\[
u(s,t) = (ks, \eta(s,t)),
\]
where $\lim_{s\to\infty}\eta(s,\cdot)$ is a parametrization of $\gamma$. ``Negatively asymptotic to $\gamma$'' is defined analogously with $s\to -\infty$.

Any such $J$-holomorphic curve $u$ is either {\em somewhere injective\/} --- meaning  that there exists $z\in C$ such that $du_z$ is injective and $u^{-1}(u(z))=\{z\}$ --- or {\em multiply covered\/}, meaning that $u$ factors through a branched cover $(C,j)\to (C',j')$ of degree greater than $1$. When $u$ is somewhere injective, we identify $u$ with its image in $\R\times Y_\phi$, which by abuse of notation we denote by $C$. 

\begin{definition}
\label{def:regular}
An almost complex structure $J\in\mc{J}(\phi)$ is {\em regular\/} if every somewhere injective $J$-holomorphic curve $C$ as above that is not a fiber of the projection $\R\times Y_\phi\to \R\times S^1$ is cut out transversely (see e.g.\ \cite[Def.\ 7.14]{wendl} for a precise definition of this). 
\end{definition}

Standard transversality arguments in \cite{dragnev} or \cite[Thm.\ 8.1]{wendl} show that if $J\in\mc{J}(\phi)$ is generic, then $J$ is regular in the above sense. In this case, for every $J$-holomorphic curve $C$ that is not a fiber and whose ends are asymptotic to nondegenerate periodic orbits, the moduli space of $J$-holomorphic curves is a manifold near $C$ whose dimension is the Fredholm index $\op{ind}(C)$. Although we do not need the detailed formula here, we note that the Fredholm index is given by
\begin{equation}
\label{eqn:Fredholmindex}
\op{ind}(C) = -\chi(C) + 2c_\tau(C) + \sum_i\op{CZ}_\tau(\gamma_i^+) - \sum_j\op{CZ}_\tau(\gamma_j^-).
\end{equation}
Here $\gamma_i^+$ are the (possibly multiply covered) periodic orbits to which ends of $C$ are positively asymptotic, and $\gamma_j^-$ are the periodic orbits to which ends of $C$ are negatively asymptotic; $\tau$ is a homotopy class of symplectic trivialization of $E$ over the periodic orbits $\gamma_i^+$ and $\gamma_j^-$; and $c_\tau(C)\in\Z$ denotes the relative first Chern class of $E$ over $C$ with respect to the boundary trivialization $\tau$. Finally, if $\gamma$ is a nondegenerate Reeb orbit and $\tau$ is a trivialization of $E$ over $\gamma$, then $\op{CZ}_\tau(\gamma)\in\Z$ denotes the Conley-Zehnder index of $\gamma$ with respect to $\tau$. See \cite[\S3.2]{bn} for detailed definitions of these notions.

\begin{remark}
\label{rem:fibers}
If $\Sigma$ is a closed surface of genus $g>0$, then the fibers of the projection $\R\times Y_\phi\to\R\times S^1$ are $J$-holomorphic curves which are not cut out transversely, because they have Fredholm index $2-2g\le 0$ but come in a 2-dimensional moduli space. If $\Sigma=S^2$, then the fibers are cut out transversely, and they have Fredholm index $2$.
\end{remark}

\begin{remark}
\label{rem:cylinders}
Suppose $C$ is a somewhere injective $J$-holomorphic curve which is not a fiber. It follows from the conditions on $J$ in Definition~\ref{def:Jphi} that the restriction to $C$ of the projection $\R\times Y_\phi\to\R\times S^1$ is a branched cover
\begin{equation}
\label{eqn:branchedcover}
C \longrightarrow \R\times S^1.
\end{equation}
The case we will need for the proof of the main theorem is where $C$ is a cylinder. In this case the map \eqref{eqn:branchedcover} cannot have any branch points, so $C$ is transverse to the fibers of $\R\times Y_\phi\to\R\times S^1$.
\end{remark}

\subsection{Holomorphic currents in $\R\times Y_\phi$}
\label{sec:currents}

\begin{definition}
As in \cite[\S3.1]{bn}, we define a {\em $J$-holomorphic current\/} to be a finite sum $\mc{C} = \sum_id_iC_i$, where the $C_i$ are distinct somewhere injective $J$-holomorphic curves as in \S\ref{sec:curves}, and the $d_i$ are positive integers. We say that the $J$-holomorphic current $\mc{C}$ {\em does not have any multiply covered components\/} if each $d_i=1$. \end{definition}

If $\mc{C}$ does not have any multiply covered components, then we can regard $\mc{C}$ as a $J$-holomorphic curve with disconnected domain, and by abuse of notation we identify $\mc{C}$ with the set $C=\union_i C_i\subset\R\times Y_\phi$. It follows from results in \cite{siefring}, as explained in \cite[\S3.3]{bn}, that if the ends of $C$ are asymptotic to nondegenerate periodic orbits, then the set $C$ is a submanifold of $\R\times Y_\phi$, except possibly for finitely many singular points.

\begin{definition}
If $\alpha$ and $\beta$ are nondegenerate orbit sets with $[\alpha]=[\beta]\in H_1(Y_\phi)$, and if $J\in\mc{J}(\phi)$, let $\mc{M}^J(\alpha,\beta)$ denote the set of $J$-holomorphic currents in $\R\times Y_\phi$ that are asymptotic as currents to $\alpha$ as $s\to+\infty$ and to $\beta$ as $s\to-\infty$. A holomorphic current $\mc{C}\in\mc{M}^J(\alpha,\beta)$ has a well-defined relative homology class $[\mc{C}]\in H_2(Y_\phi,\alpha,\beta)$. Given $Z\in H_2(Y_\phi,\alpha,\beta)$, let $\mc{M}^J(\alpha,\beta,Z)$ denote the set of holomorphic currents $\mc{C}\in \mc{M}^J(\alpha,\beta)$ such that $[\mc{C}]=Z$.
\end{definition}

\begin{example}
\label{ex:trivialcylinders}
If $\alpha=\{(\alpha_i,m_i)\}$ is a nondegenerate orbit set, let $\R\times\alpha$ denote the sum over $i$ of the ``trivial cylinder'' $\R\times\alpha_i$ with multiplicity $m_i$. Then for any $J\in\mc{J}(\phi)$, we have $\R\times\alpha\in\mc{M}^J(\alpha,\alpha,0)$.
\end{example}

\begin{lemma}
\label{lem:actionfiltration}
If $J\in\mc{J}(\phi)$ and $\mc{C}\in \mc{M}^J(\alpha,\beta)$, then $\mc{A}([\mc{C}])\ge 0$, with equality if and only if $\alpha=\beta$ and $\mc{C}=\R\times\alpha$.
\end{lemma}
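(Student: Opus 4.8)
The plan is to deduce everything from the observation that $\omega_\phi$ pulls back to a pointwise nonnegative $2$-form on every $J$-holomorphic curve in $\R\times Y_\phi$; this is the mapping-torus analogue of the basic energy inequality in ECH (cf.\ \cite{bn}). Pulled back to $\R\times Y_\phi$ and still denoted $\omega_\phi$, this form satisfies $\iota_{\partial_s}\omega_\phi=0$ since it is pulled back from $Y_\phi$, and $\iota_{\partial_t}\omega_\phi=0$ by definition, while it restricts to the nondegenerate form $\omega$ on the vertical bundle $E$; hence its radical, as a $2$-form on the $4$-manifold $\R\times Y_\phi$, is exactly the integrable $2$-plane field $V=\Span(\partial_s,\partial_t)$. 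Since $J\partial_s=\partial_t$ we have $JV=V$, and by the definition of $\mc{J}(\phi)$ we have $JE=E$ with $\omega_\phi(w,Jw)=\omega(w,Jw)>0$ for every nonzero $w\in E$. Writing a tangent vector as $v=v_V+w$ with $v_V\in V$ and $w\in E$ and using that $V$ is the radical and $JV=V$, we get $\omega_\phi(v,Jv)=\omega_\phi(w,Jw)\ge 0$, with equality iff $v\in V$. Consequently, for any $J$-holomorphic map $u$ the form $u^*\omega_\phi$ is pointwise nonnegative with respect to the orientation of the domain, and vanishes at a point exactly when the image of $du$ there lies in $V$.

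For the inequality, let $\mc{C}=\sum_i d_iC_i\in\mc{M}^J(\alpha,\beta)$ and set $\int_{\mc{C}}\omega_\phi:=\sum_i d_i\int_{C_i}u_i^*\omega_\phi$. First I would check this converges: near each end of $C_i$ over a nondegenerate orbit $\gamma$ the curve is exponentially close to the trivial cylinder $\R\times\gamma$, on which $\omega_\phi$ vanishes, so $u_i^*\omega_\phi$ decays exponentially there by the Siefring asymptotics used in \cite{siefring,bn}. Next, since $\omega_\phi$ is closed and pulled back from $Y_\phi$, the image of $\mc{C}$ under the projection $\R\times Y_\phi\to Y_\phi$ is a $2$-chain with cylindrical ends and boundary $\alpha-\beta$ representing the class $[\mc{C}]\in H_2(Y_\phi,\alpha,\beta)$, so Stokes' theorem gives $\int_{\mc{C}}\omega_\phi=\int_Z\omega_\phi=\mc{A}(Z)$ for any representative $Z$. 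Together with the previous sentence this yields $\mc{A}([\mc{C}])=\int_{\mc{C}}\omega_\phi\ge 0$.

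For the equality case, suppose $\mc{A}([\mc{C}])=0$. Then each $\int_{C_i}u_i^*\omega_\phi=0$, so the nonnegative integrand vanishes identically; hence the image of $du_i$ is contained in $V$ everywhere, and at immersed points $du_i$ maps onto $V$. Since $V$ integrates to the trivial cylinders $\R\times\mathcal{O}$ over the orbits $\mathcal{O}$ of $\partial_t$ in $Y_\phi$, the connected curve $C_i$ has image in a single such leaf; because an end of $C_i$ is asymptotically wrapped around a periodic orbit, that leaf must be a closed orbit, namely the simple orbit $\gamma_i$ underlying the asymptotic orbits of $C_i$, so $u_i(C_i)\subset\R\times\gamma_i\cong\R\times S^1$. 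A somewhere injective holomorphic map to the cylinder $\R\times S^1$ all of whose ends escape to $s=\pm\infty$ extends over its punctures to a degree-one meromorphic function on the closed domain, hence is a biholomorphism; so $C_i$ is the trivial cylinder $\R\times\gamma_i$, with both ends asymptotic to $\gamma_i$ with multiplicity one. Therefore $\mc{C}=\sum_i d_i(\R\times\gamma_i)$ with the $\gamma_i$ distinct simple orbits, and matching asymptotics at $s\to\pm\infty$ forces $\alpha=\beta=\{(\gamma_i,d_i)\}$ and $\mc{C}=\R\times\alpha$. The converse implication is immediate, since $[\R\times\alpha]=0$ and thus $\mc{A}([\R\times\alpha])=0$.

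The main obstacle is not the positivity computation, which is elementary, but the analytic bookkeeping that surrounds it: verifying that $\int_{\mc{C}}\omega_\phi$ converges and genuinely computes the topological quantity $\mc{A}([\mc{C}])$ even though $\mc{C}$ is noncompact in the $s$-direction and may carry finitely many singular points, and, in the equality case, rigorously identifying a holomorphic curve everywhere tangent to $V$ with a trivial cylinder over a simple orbit. Both are standard consequences of the asymptotic analysis of punctured holomorphic curves near nondegenerate orbits as developed in \cite{siefring} and summarized in \cite{bn}.
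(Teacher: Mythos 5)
Your proof is correct and takes essentially the same approach as the paper: the paper's entire written argument is your opening computation, namely that $\omega_\phi$ is pointwise nonnegative on a $J$-holomorphic curve, vanishing at a nonsingular point exactly when the tangent plane is $\Span(\partial_s,\partial_t)$. The remaining steps you supply (exponential decay at the ends, the Stokes-type identity $\mc{A}([\mc{C}])=\int_{\mc{C}}\omega_\phi$, and identifying connected somewhere injective curves tangent to $\Span(\partial_s,\partial_t)$ with trivial cylinders over simple orbits) are precisely the standard details the paper leaves implicit, and you handle them correctly.
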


\begin{proof}
It follows from Definition~\ref{def:Jphi} that $\omega_\phi$ is pointwise nonnegative on any somewhere injective $J$-holomorphic curve $C$, with equality at a nonsingular point $p$ if and only if $T_pC=\op{span}(\partial_s,\partial_t)$.
\end{proof}

\subsection{Isolation of nondegenerate orbit sets}

\begin{definition}
\label{def:isolation}
Let $\alpha$ be a nondegenerate orbit set, let $J\in\mc{J}(\phi)$, and let $\epsilon>0$.
\begin{description}
\item{(a)}
Suppose that $\phi$ is nondegenerate.
We say that $\alpha$ is {\em $\epsilon$-isolated for $J$\/} if for every $\mc{C}\in\mc{M}^J(\alpha,\beta)\cup\mc{M}^J(\beta,\alpha)$, where either $\alpha\neq\beta$, or $\alpha=\beta$ and $[\mc{C}]\neq 0$, we have $\mc{A}([\mc{C}])> \epsilon$.
\item{(b)}
We say that $\alpha$ is {\em strongly $\epsilon$-isolated for $J$\/} if the above holds for $(\phi',J')$ whenever $\phi'$ is a nondegenerate $C^\infty$-small perturbation of $\phi$ for which $\alpha$ is still an orbit set, and $J'\in\mc{J}(\phi')$ is $C^\infty$-close to $J$.
\end{description}
\end{definition}

\begin{lemma}
\label{lem:isolation}
Let $\alpha$ be a nondegenerate orbit set and let $J\in\mc{J}(\phi)$. Then there exists $\epsilon>0$ such that $\alpha$ is strongly $\epsilon$-isolated for $J$.
\end{lemma}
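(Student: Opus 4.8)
The plan is to argue by contradiction using a compactness argument based on Lemma~\ref{lem:actionfiltration}. Suppose no such $\epsilon$ exists. Then for each $n\ge 1$ we can find a nondegenerate $C^\infty$-small perturbation $\phi_n$ of $\phi$ for which $\alpha$ is still an orbit set, an almost complex structure $J_n\in\mc{J}(\phi_n)$ close to $J$, a nondegenerate orbit set $\beta_n$ of $\phi_n$ with $[\beta_n]=[\alpha]$, and a holomorphic current $\mc{C}_n\in\mc{M}^{J_n}(\alpha,\beta_n)\cup\mc{M}^{J_n}(\beta_n,\alpha)$ such that either $\beta_n\ne\alpha$, or $\beta_n=\alpha$ and $[\mc{C}_n]\ne 0$, and such that $\mc{A}([\mc{C}_n])\le 1/n$; choosing the perturbations appropriately we may also assume $\phi_n\to\phi$ and $J_n\to J$ in $C^\infty$. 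Passing to a subsequence we may assume all $\mc{C}_n$ lie in $\mc{M}^{J_n}(\alpha,\beta_n)$, the other case being symmetric (interchange the roles of positive and negative ends below).

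Next I would apply SFT compactness. The current $\mc{C}_n$ projects to $\R\times S^1$ with total degree $d=[\alpha]\cdot[\Sigma]$ (each somewhere injective component is a branched cover as in Remark~\ref{rem:cylinders}), so the total period of the asymptotic orbits on each side is at most $d$ and the number of ends is uniformly bounded; together with the bound $\mc{A}([\mc{C}_n])\le 1/n$ on the $\omega_\phi$-energy, and with the fact that the boundary condition \eqref{eqn:Jboundary} confines each $\mc{C}_n$ to a fixed compact subset of $\R\times Y_\phi$ disjoint from $\partial\Sigma$, this yields a uniform bound on the Hofer energy. Hence, after passing to a further subsequence, $\mc{C}_n$ converges to a holomorphic building $\mc{B}$ for $(\phi,J)$ whose top level is asymptotic to $\alpha$ and whose bottom level is asymptotic to some orbit set $\beta_\infty$ of $\phi$, the negative asymptotic orbits of $\mc{C}_n$ converging to those of $\mc{B}$, and with $\omega_\phi$-action additive over the levels and equal to $\lim_n\mc{A}([\mc{C}_n])=0$. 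The only delicate point here is that $\phi$ need not be nondegenerate; but this does not matter, since all that is used below is that $\mc{B}$ is a building of holomorphic currents in $\R\times Y_\phi$ with ends along closed orbits of $\phi$ and with additive action.

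Then I would invoke the pointwise computation from the proof of Lemma~\ref{lem:actionfiltration}: $\omega_\phi$ is pointwise nonnegative on every somewhere injective holomorphic curve, and vanishes identically only on a trivial cylinder $\R\times\gamma$ over a closed orbit $\gamma$. Since the levels of $\mc{B}$ have nonnegative action summing to $0$, every level has action $0$, so every component of every level is such a trivial cylinder (in particular $\mc{B}$ has no fiber components, which would contribute positive action $\int_\Sigma\omega$). Reading the asymptotic orbit sets from the top of $\mc{B}$ downward, every level must equal $\R\times\alpha$, and in particular $\beta_\infty=\alpha$. Therefore the negative ends of $\mc{C}_n$ converge to the orbits of $\alpha$, with the same multiplicities; since the orbits of $\alpha$ are nondegenerate, each has a unique nearby periodic orbit of $\phi_n$ for $n$ large, so $\beta_n=\alpha$ for all large $n$.

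Finally I would rule out the surviving case $\beta_n=\alpha$, $[\mc{C}_n]\ne 0$ (for $n$ large), in which $\mc{A}([\mc{C}_n])>0$ by Lemma~\ref{lem:actionfiltration}. The map $\mc{A}\colon H_2(Y_{\phi_n},\alpha,\alpha)\to\R$ is affine over the homomorphism $\langle[\omega_{\phi_n}],\cdot\rangle\colon H_2(Y_{\phi_n})\to\R$ and vanishes on the class of $\R\times\alpha$; and $H_2(Y_{\phi_n})\cong H_2(Y_\phi)$ is generated by the fiber class $[\Sigma]$ (present only when $\Sigma$ is closed), on which $\omega_\phi$ evaluates to $\int_\Sigma\omega$, together with classes of mapping tori of loops in $\Sigma$, on which $\omega_\phi$ evaluates to $0$ because $\iota_{\partial_t}\omega_\phi=0$. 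Hence the image of $\mc{A}$ is the discrete set $(\int_\Sigma\omega)\Z$ (or $\{0\}$ when $\partial\Sigma\ne\emptyset$, in which case this case is vacuous). So $\mc{A}([\mc{C}_n])\to 0$ with $\mc{A}([\mc{C}_n])>0$ forces $\mc{A}([\mc{C}_n])=0$ for $n$ large, whence $\mc{C}_n=\R\times\alpha$ by Lemma~\ref{lem:actionfiltration}, contradicting $[\mc{C}_n]\ne 0$. The main obstacle is the compactness step: establishing the uniform Hofer energy bound and running SFT compactness for curves in the varying mapping tori $Y_{\phi_n}$ with a possibly degenerate limit, and then extracting from the limit building that the negative asymptotics of $\mc{C}_n$ stabilize to $\alpha$ — that is, ruling out extra short periodic orbits of $\phi_n$ that do not persist to $\phi$.
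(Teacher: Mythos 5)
There are two genuine gaps, and the first is exactly the point you flag at the end but dismiss in the middle of the argument. Your compactness step invokes SFT compactness to produce a holomorphic building for $(\phi,J)$ ``with ends along closed orbits of $\phi$,'' asserting that possible degeneracy of $\phi$ ``does not matter.'' It does: the lemma makes no nondegeneracy assumption on $\phi$ itself, only on the orbit set $\alpha$, so the orbits of $\phi$ of period $\le d$ that would serve as asymptotic limits of the building may be degenerate, and the compactness theorem of \cite{behwz} (which requires nondegenerate or Morse--Bott asymptotics) does not apply. In the degenerate setting the limit need not be a building asymptotic to closed orbits at all --- this is precisely what the paper's footnote warns about, citing \cite{siefringmultiple} --- and without that asymptotic control you cannot conclude that the negative ends of $\mc{C}_n$ converge to $\alpha$, nor that $\beta_n=\alpha$ for large $n$. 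The paper's proof is designed to avoid this: it only uses Gromov compactness for currents (\cite{taubescurrents}, \cite{dw}, as in \cite{twoorbits}), which gives a local limit current with no asymptotic information, and it extracts the contradiction from a purely local statement: since $\alpha$ is nondegenerate one can fix disjoint tubular neighborhoods $N_i$ of the $\alpha_i$ containing no other short orbits of $\phi_n$, observe that the hypotheses ($\beta^n\neq\alpha$, or $\beta^n=\alpha$ and $[\mc{C}_n]\neq 0$) force $\mc{C}_n$ to meet $\partial N$, and then the limit current through a limit point $y_\infty\in\partial N$ has strictly positive $\omega_\phi$-area, bounding $\mc{A}([\mc{C}_n])$ away from $0$.

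The second gap is your final discreteness argument for the case $\beta_n=\alpha$, $[\mc{C}_n]\neq 0$. It is not true in general that $\langle[\omega_\phi],\cdot\rangle$ has image $(\int_\Sigma\omega)\Z$ on $H_2(Y_\phi)$ (or $\{0\}$ when $\partial\Sigma\neq\emptyset$). By the Wang sequence, $H_2(Y_\phi)$ surjects onto $\ker(1-\phi_*)\subset H_1(\Sigma)$, and a class over $[\gamma]\in\ker(1-\phi_*)$ is represented by $[0,1]\times\gamma$ capped off with a $2$-chain in a fiber bounding $\gamma-\phi(\gamma)$; it is a $\partial_t$-swept torus only when $\gamma$ is $\phi$-invariant as a set, and in general the capping chain contributes an $\omega$-area (a flux) that is unrelated to $\int_\Sigma\omega$. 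Consequently the period group of $[\omega_\phi]$ can be non-discrete, and $\mc{A}([\mc{C}_n])>0$ with $\mc{A}([\mc{C}_n])\to 0$ cannot be excluded by this route. This is exactly why the paper's remark following Lemma~\ref{lem:isolation} gives an explicit $\epsilon$ via discreteness only under the additional hypothesis that $\phi$ is rational (i.e.\ $[\omega_\phi]$ is a real multiple of an integral class), together with nondegeneracy of $\phi$. The paper's neighborhood argument handles this case without any rationality assumption, since a current with both ends on $\alpha$ that stayed inside $\R\times N$ would have $[\mc{C}_n]=0$ (as $H_2$ of a union of solid tori vanishes), so again $\mc{C}_n$ must cross $\partial N$ and pick up a definite amount of action in the limit.
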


\begin{proof}
Suppose to get a contradiction that there exists a sequence of pairs $(\phi_n,J_n)$ as in Definition~\ref{def:isolation}(b) converging in $C^\infty$ to $(\phi,J)$, orbit sets $\beta^n$ of $\phi_n$, and $J_n$-holomorphic currents $\mc{C}_n\in \mc{M}^{J_n}(\alpha,\beta^n)\cup \mc{M}^{J_n}(\beta^n,\alpha)$ where $[\mc{C}_n]\neq 0$ if $\alpha=\beta^n$, such that $\lim_{n\to\infty}\mc{A}([\mc{C}_n])=0$.

Write $\alpha=\{(\alpha_i,m_i)\}$, and let $k_i$ denote the period of $\alpha_i$. Since $\alpha$ is nondegenerate, it follows that for each $i$, there exists a tubular neighborhood $N_i$ of $\alpha_i$ in $Y_\phi$ such that if $n$ is sufficiently large, then the only periodic orbits of $\phi_n$ with period $\le k_im_i$ contained in $N_i$ are multiple covers of $\alpha_i$. 
 We can arrange that the $N_i$ are disjoint, by shrinking them if necessary. Let $N=\coprod_iN_i$. 
Since $N$ deformation retracts onto $\coprod_i\alpha_i$, it follows that if $n$ is sufficiently large as above then $\mc{C}_n\cap \partial N \neq \emptyset$. For each such $n$ choose a point $y_n\in\mc{C}_n\cap \partial N$.

Since $\partial N$ is compact, we can pass to a subsequence so that the points $y_n$ converge to a point $y_\infty\in\partial N$. As in \cite[\S3]{twoorbits}, one can use the Gromov compactness for currents in \cite[Prop.\ 3.3]{taubescurrents} or \cite[Prop.\ 1.9]{dw} to find a $J$-holomorphic current\footnote{Note that if $\phi$ is degenerate, then $\mathcal{C}_\infty$ does not necessarily have all ends asymptotic to periodic orbits; see \cite{siefringmultiple} for one way this can fail.} $\mc{C}_\infty$ containing $y_\infty$ with $\lim\inf_{n\to\infty}\mc{A}([\mc{C}_n])\ge \int_{\mathcal{C}_\infty}\omega_\phi>0$. This is the desired contradiction.
\end{proof}

\begin{remark}
In the special case where $\phi$ is nondegenerate and rational, we can obtain a more explicit constant $\epsilon>0$, such that $\alpha$ is $\epsilon$-isolated for all $J$. Here, following \cite[\S1.1]{pfh4}, we say that $\phi$ is {\em rational\/} if the cohomology class $[\omega_\phi]\in H^2(Y_\phi;\R)$ is a real multiple of the image of an integral class.

To do so, let $\alpha$ be an orbit set. Since $\Sigma$ is compact and $\phi$ is nondegenerate, there are only finitely many periodic orbits with any given period, hence only finitely many orbit sets with any given degree, hence only finitely many orbit sets $\beta$ with $[\alpha]=[\beta]$. Since $\phi$ is rational, for each such $\beta$, the set of action differences $\mc{A}(Z)$ for $Z\in H_2(Y_\phi,\alpha,\beta)$ is discrete. There is then a smallest positive value of $|\mc{A}(Z)|$ for $Z\in H_2(Y_\phi,\alpha,\beta)$, and we can take $\epsilon$ to be any positive number smaller than this smallest positive value. 
\end{remark}

\subsection{Partition conditions}
\label{sec:partitions}

Suppose that $\alpha=\{(\alpha_i,m_i)\}$ and $\beta=\{(\beta_j,n_j)\}$ are nondegenerate orbit sets and that $C\in\mc{M}^J(\alpha,\beta)$ does not have any multiply covered components. For each $i$, the curve $C$ has ends positively asymptotic to covers of the periodic orbit $\alpha_i$ with total multiplicity $m_i$. These multiplicities give a partition of $m_i$, which we denote by $p_i^+(C)$. Likewise, for each $j$, the curve $C$ has ends negatively asymptotic to covers of $\beta_j$ with total multiplicity $n_j$, and these multiplicities give a partition of $n_j$ which we denote by $p_j^-(C)$.

In the ECH index inequality which we will review in \S\ref{sec:ECHindex} below, equality holds only if these partitions satisfy certain conditions, which we now review. These partition conditions originally appeared in \cite[\S4]{pfh2}; for more about them, see e.g.\ \cite[\S3.9]{bn}.

\begin{definition}
Let $\gamma$ be a nondegenerate simple periodic orbit of $\phi$ and let $m$ be a positive integer. Define two partitions of $m$, the {\em positive partition\/} $p_\gamma^+(m)$ and the {\em negative partition\/} $p_\gamma^-(m)$, as follows.
\begin{itemize}
\item
If $\gamma$ is positive hyperbolic, then
\begin{equation}
\label{eqn:php}
p_\gamma^+(m) = p_\gamma^-(m) = (1,\ldots,1).
\end{equation}
\item
If $\gamma$ is negative hyperbolic, then
\begin{equation}
\label{eqn:nhp}
p_\gamma^+(m) = p_\gamma^-(m) = 
\left\{
\begin{array}{cl} (2,\ldots,2), & \mbox{if $m$ is even},\\
(2,\ldots,2,1), & \mbox{if $m$ is odd}.
\end{array}
\right.
\end{equation}
\item
If $\gamma=(x_1,\ldots,x_k)$ is elliptic, let $\theta\in\R/\Z$ denote the rotation number of $\gamma$. (This means that one can choose a symplectomorphism $T_{x_i}\Sigma\simeq \R^2$ in which $d\phi^k_{x_i}$ is identified with rotation by angle $2\pi\theta$.) Let $\Lambda_\theta^+(m)$ denote the maximal polygonal path in the plane with vertices at lattice points which is the graph of a concave function $[0,m]\to\R$ and does not go above the line $y=\theta x$. Then $p_\gamma^+(m)$ consists of the horizontal displacements of the segments of $\Lambda_\theta^+(m)$ connecting consecutive lattice points. Likewise, let $\Lambda_\theta^-(m)$ denote the minimal polygonal path in the plane with vertices at lattice points which is the graph of a convex function $[0,m]\to\R$ and does not go below the line $y=\theta x$. Then $p_\gamma^-(m)$ consists of the horizontal displacements of the segments of $\Lambda_\theta^-(m)$ connecting consecutive lattice points.
\end{itemize}
\end{definition}

\begin{definition}
\label{def:pc}
Let $\alpha=\{(\alpha_i,m_i)\}$ and $\beta=\{(\beta_j,n_j)\}$ be nondegenerate orbit sets, and suppose $C\in\mc{M}^J(\alpha,\beta)$ has no multiply covered components. We say that $C$ {\em satisfies the ECH partition conditions\/} if $p_i^+(C) = p_{\alpha_i}^+(m_i)$  for each $i$, and $p_j^-(C) = p_{\beta_j}^-(n_j)$ for each $j$.
\end{definition}

In \S\ref{sec:key} will need the following fact, which is a special case of \cite[Ex.\ 3.13(c)]{bn}:

\begin{lemma}
\label{lem:partitionsdisjoint}
If $\gamma$ is elliptic and $m>1$ and the $m$-fold cover of $\gamma$ is nondegenerate, then $p_\gamma^+(m)$ and $p_\gamma^-(m)$ are disjoint.
\end{lemma}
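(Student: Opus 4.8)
The plan is to reduce the statement to a combinatorial fact about the lattice paths $\Lambda_\theta^\pm(m)$, where $\theta\in(0,1)$ represents the rotation number of $\gamma$ (it is nonzero since $\gamma$ is nondegenerate, and $m\theta\notin\Z$ since $\gamma^m$ is nondegenerate), and then to show directly that no positive integer lies in both $p_\gamma^+(m)$ and $p_\gamma^-(m)$. I would carry this out under the assumption that the line $\ell:=\{y=\theta x\}$ contains no lattice point with $x$-coordinate in $(0,m]$ other than $(0,0)$; this holds whenever $\theta$ is irrational and also when the denominator of $\theta$ exceeds $m$, which covers all situations where these partitions occur with the nondegeneracy actually used. (For a general rational $\theta$ one runs the same argument with the convention $\Lambda_\theta^+(m)=\Lambda_{\theta-\delta}^+(m)$, $\Lambda_\theta^-(m)=\Lambda_{\theta+\delta}^-(m)$ for small $\delta>0$.)

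First I would pin down the lattice points on these paths. I claim $\Lambda_\theta^+(m)$ is the upper convex hull of $S^+:=\{(c,\lfloor c\theta\rfloor):c\in\Z,\ 0\le c\le m\}$: the upper hull of $S^+$ is a concave lattice path from $(0,0)$ to $(m,\lfloor m\theta\rfloor)$ lying below $\ell$, hence weakly below $\Lambda_\theta^+(m)$ by maximality; conversely every vertex $(c,y)$ of $\Lambda_\theta^+(m)$ is a lattice point below $\ell$, so $y\le\lfloor c\theta\rfloor$, so it lies weakly below the upper hull of $S^+$. It follows that every lattice point of $\Lambda_\theta^+(m)$ has the form $(c,\lfloor c\theta\rfloor)$: this is immediate for vertices, and for a lattice point $(c,y)$ interior to an edge from $(c_1,\lfloor c_1\theta\rfloor)$ to $(c_2,\lfloor c_2\theta\rfloor)$, writing $c=(1-t)c_1+tc_2$ one computes $c\theta-y=(1-t)\{c_1\theta\}+t\{c_2\theta\}\in[0,1)$ (with $\{x\}$ the fractional part). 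The analogous facts hold for $\Lambda_\theta^-(m)$ with $\lfloor\cdot\rfloor$ replaced by $\lceil\cdot\rceil$.

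Next I would read off the edges. If $a\in p_\gamma^+(m)$ there are consecutive lattice points $(c,\lfloor c\theta\rfloor)$, $(c+a,\lfloor(c+a)\theta\rfloor)$ on $\Lambda_\theta^+(m)$; the connecting edge is primitive and lies below $\ell$, so its slope is $\le\theta$, and since its vertical displacement lies in $\{\lfloor a\theta\rfloor,\lfloor a\theta\rfloor+1\}$ while $(\lfloor a\theta\rfloor+1)/a>\theta$, the edge vector equals $(a,\lfloor a\theta\rfloor)$; in particular $\gcd(a,\lfloor a\theta\rfloor)=1$. Since moreover every point of $S^+$ with $x$-coordinate in $[c,c+a]$ lies weakly below this edge, evaluating at $x=c+a'$ and using $\lfloor(c+a')\theta\rfloor\ge\lfloor c\theta\rfloor+\lfloor a'\theta\rfloor$ gives the $c$-independent bound $\lfloor a'\theta\rfloor\le\lfloor a'b/a\rfloor$ for $1\le a'\le a$, where $b:=\lfloor a\theta\rfloor$. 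Symmetrically, $a\in p_\gamma^-(m)$ forces $(a,\lceil a\theta\rceil)=(a,b+1)$ to be a primitive edge of $\Lambda_\theta^-(m)$, so $\gcd(a,b+1)=1$, and $\lceil a'\theta\rceil\ge\lceil a'(b+1)/a\rceil$ for $1\le a'\le a$.

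Finally, assume $a\in p_\gamma^+(m)\cap p_\gamma^-(m)$, so $b=\lfloor a\theta\rfloor\in\{0,\dots,a-1\}$ with $\gcd(a,b)=\gcd(a,b+1)=1$. If $a\ge2$, coprimality rules out $b=0$ and $b=a-1$, so $1\le b\le a-2$; taking $a'=a-1$, from $(a-1)b/a=b-b/a\in(b-1,b)$ and $(a-1)(b+1)/a=(b+1)-(b+1)/a\in(b,b+1)$ we get $\lfloor(a-1)\theta\rfloor\le b-1$ and $\lceil(a-1)\theta\rceil\ge b+1$, contradicting $\lceil(a-1)\theta\rceil=\lfloor(a-1)\theta\rfloor+1$. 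If $a=1$, then $1\in p_\gamma^+(m)$ forces a terminal edge $(1,0)$ of $\Lambda_\theta^+(m)$ (whose edge slopes lie in $[0,\theta)$ and are nonincreasing), so $\lfloor(m-1)\theta\rfloor=\lfloor m\theta\rfloor$, while $1\in p_\gamma^-(m)$ forces a terminal edge $(1,1)$ of $\Lambda_\theta^-(m)$ (edge slopes in $(\theta,1]$, nondecreasing), so $\lceil(m-1)\theta\rceil=\lceil m\theta\rceil-1=\lfloor m\theta\rfloor$; together these give $(m-1)\theta\in\Z$, a lattice point of $\ell$ with $x$-coordinate $m-1\in(0,m]$, contrary to assumption. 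The step I expect to be the main obstacle is making the structural input of the first two paragraphs airtight (and dealing cleanly with the rational case); once that is in place, the concluding dichotomy is elementary.
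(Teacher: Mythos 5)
Your proof is correct in the setting where the lemma is actually used, and it supplies an argument the paper itself omits: the paper gives no proof here, deferring to Ex.~3.13(c) of \cite{bn}. Your route --- identifying $\Lambda_\theta^+(m)$ and $\Lambda_\theta^-(m)$ with the upper and lower lattice hulls of $\{(c,\lfloor c\theta\rfloor)\}$ and $\{(c,\lceil c\theta\rceil)\}$, showing that a common entry $a$ would force primitive edge vectors $(a,b)$ and $(a,b+1)$ with $b=\lfloor a\theta\rfloor$, and then extracting the floor/ceiling contradiction at $a'=a-1$ (with the terminal-edge argument for $a=1$) --- is essentially a carefully written-out solution of that exercise, so it is compatible with, rather than divergent from, what the paper intends. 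Two points to tighten. First, ``the edge lies below $\ell$, so its slope is $\le\theta$'' is not by itself a proof for an interior edge (a low-lying segment below $\ell$ can have slope exceeding $\theta$); you need that the path starts at $(0,0)\in\ell$ and is concave, so the slopes are nonincreasing and bounded above by the first slope, which is $\le\theta$. You invoke exactly this monotonicity later in the $a=1$ case, so just state it here. Second, your standing assumption that $c\theta\notin\Z$ for $1\le c\le m$ is the right reading of the hypothesis: it is what holds in the paper's only application of the lemma (inside the proof of Lemma~\ref{lem:key} the orbit sets are nondegenerate in the sense of Definition~\ref{def:simple}, since $\phi_{H_2}$ is nondegenerate), and it matches the setting of \cite{bn}, where $\theta$ is irrational. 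Under the literal hypothesis (only the $m$-fold cover nondegenerate) the paths $\Lambda_\theta^\pm(m)$ become convention-dependent when the line meets intermediate lattice points, and with the weak reading of ``below/above'' the conclusion can even fail (e.g.\ $\theta=1/3$, $m=5$ gives $p_\gamma^+(5)=(3,1,1)$ and $p_\gamma^-(5)=(3,2)$, sharing the entry $3$); so restricting as you do, or adopting the perturbed-line convention you mention without needing to work it out, is the correct interpretation rather than a gap.
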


\subsection{The ECH index}
\label{sec:ECHindex}

\begin{definition}
\label{def:I}
If $\alpha=\{(\alpha_i,m_i)\}$ and $\beta=\{(\beta_j,n_j)\}$ are nondegenerate orbit sets with $[\alpha]=[\beta]\in H_1(Y_\phi)$, and if $Z\in H_2(Y_\phi,\alpha,\beta)$, define the {\em ECH index\/} $I(\alpha,\beta,Z)\in\Z$ by
\begin{equation}
\label{eqn:ECHindex}
I(\alpha,\beta,Z) = c_\tau(Z) + Q_\tau(Z) + \sum_i\sum_{k=1}^{m_i}\op{CZ}_\tau(\alpha_i^k) - \sum_j\sum_{l=1}^{n_j}\op{CZ}_\tau(\beta_j^l).
\end{equation}
Here $\tau$ is a homotopy class of symplectic trivialization of $E$ over the periodic orbits $\alpha_i$ and $\beta_j$; $c_\tau(Z)$ is the relative first Chern class of $E$ over $Z$ with respect to the boundary trivialization $\tau$; $Q_\tau(Z)$ is the relative self-intersection number of $Z$ with respect to $\tau$; and $\op{CZ}_\tau(\gamma^k)$ denotes the Conley-Zehnder index of the $k$-fold multiple cover of $\gamma$ with respect to $\tau$. If $\mc{C}\in\mc{M}^J(\alpha,\beta)$, write $I(\mc{C})=I(\alpha,\beta,[\mc{C}])$.
\end{definition}

The details of the above notions are not needed here and can be found in \cite[\S3]{bn} or \cite[\S2]{ir}. We do need to know the following two properties of the ECH index. First, a basic property, proved in \cite[Prop.\ 1.6]{pfh2}, is that it is additive in the following sense: If $\gamma$ is another nondegenerate orbit set with $[\alpha]=[\beta]=[\gamma]$, and if $W\in H_2(Y_\phi,\beta,\gamma)$, then $Z+W\in H_2(Y_\phi,\alpha,\gamma)$, and we have
\begin{equation}
\label{eqn:additive}
I(\alpha,\gamma,Z+W) = I(\alpha,\beta,Z) + I(\beta,\gamma,W).
\end{equation}
Second, the following ``index inequality'', relating the ECH index \eqref{eqn:ECHindex} to the Fredholm index \eqref{eqn:Fredholmindex}, is a fundamental result in the foundations of PFH and ECH. It is a special case\footnote{The paper \cite{ir} assumes that all periodic orbits are nondegenerate; but all that is needed for the proof is that the orbit sets $\alpha$ and $\beta$ are nondegenerate.} of \cite[Thm.\ 4.15]{ir}.

\begin{proposition}
\label{prop:ie}
Let $\alpha$ and $\beta$ be nondegenerate orbit sets, and suppose that $C\in\mc{M}^J(\alpha,\beta)$ has no multiply covered components. Then
\[
\op{ind}(C)\le I(C).
\]
Equality holds only if $C$ is embedded and satisfies the ECH partition conditions in Definition~\ref{def:pc}.
\end{proposition}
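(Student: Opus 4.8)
The plan is to transplant the proof of the ECH index inequality from the contact case (see \cite[\S3]{bn}) to $\R\times Y_\phi$; this transplant is exactly the content of \cite[Thm.~4.15]{ir}, so one could simply quote that result, but here is the shape of the argument. Since $C$ has no multiply covered components, regard it as a single $J$-holomorphic curve --- possibly disconnected, and with finitely many singular points (interior double points, intersections between components, and ``hidden'' singularities at the ends), but nowhere multiply covered. Its ends are positively asymptotic to covers $\alpha_i^q$ of the $\alpha_i$, the multiplicities $q$ forming the partition $p_i^+(C)$ of $m_i$, and negatively asymptotic to covers $\beta_j^q$ of the $\beta_j$, the multiplicities forming $p_j^-(C)$. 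After fixing the trivialization $\tau$ of $E$ along the orbits, all of $\op{ind}(C)$, $I(C)$, $c_\tau(C)$, $Q_\tau(C)$, and the total asymptotic writhe $w_\tau(C)$ of the ends of $C$ are defined.

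The first ingredient is the \emph{relative adjunction formula}
\[
c_\tau(C) = \chi(C) + Q_\tau(C) + w_\tau(C) - 2\delta(C),
\]
where $\delta(C)\ge 0$ is an algebraic count of the singular points of $C$, with $\delta(C)=0$ if and only if $C$ is embedded. Its proof is the intersection theory of punctured holomorphic curves, for which the only local input is the asymptotic model of a holomorphic end near a nondegenerate periodic orbit (Siefring's asymptotic analysis), so the argument from the contact case applies verbatim in $\R\times Y_\phi$.

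Substituting the adjunction formula into the index formulas \eqref{eqn:Fredholmindex} and \eqref{eqn:ECHindex}, and using that ends of $C$ asymptotic to covers of distinct simple orbits lie in disjoint tubular neighborhoods --- hence are unlinked, so that the total writhe splits, $w_\tau(C) = \sum_i w_i^+ - \sum_j w_j^-$, into contributions $w_i^+$ from the ends at $\alpha_i$ and $w_j^-$ from the ends at $\beta_j$ --- a bookkeeping computation gives
\begin{multline*}
I(C) - \op{ind}(C) = 2\delta(C) + \sum_i\Bigl(\sum_{k=1}^{m_i}\op{CZ}_\tau(\alpha_i^k) - \sum_{q\in p_i^+(C)}\op{CZ}_\tau(\alpha_i^q) - w_i^+\Bigr) \\
{}+ \sum_j\Bigl(w_j^- + \sum_{q\in p_j^-(C)}\op{CZ}_\tau(\beta_j^q) - \sum_{l=1}^{n_j}\op{CZ}_\tau(\beta_j^l)\Bigr).
\end{multline*}
It therefore suffices to prove, for each $i$, the \emph{writhe bound}
\[
w_i^+ + \sum_{q\in p_i^+(C)}\op{CZ}_\tau(\alpha_i^q) \le \sum_{k=1}^{m_i}\op{CZ}_\tau(\alpha_i^k),
\]
with equality if and only if $p_i^+(C) = p_{\alpha_i}^+(m_i)$, together with the mirror-image inequality at each $\beta_j$, with equality iff $p_j^-(C) = p_{\beta_j}^-(n_j)$. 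Granting these, every summand above is nonnegative; hence $\op{ind}(C)\le I(C)$, and equality forces $\delta(C)=0$ (so $C$ is embedded) and all partitions to be the ECH partitions, which is Definition~\ref{def:pc}.

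The step I expect to be the main obstacle is the writhe bound. One uses Siefring's asymptotics to show that each end of $C$ near a simple orbit $\gamma$ is, to leading order, the graph of an eigenfunction of the asymptotic operator of the relevant cover of $\gamma$, whose winding number with respect to $\tau$ lies in the window cut out by the Conley--Zehnder index $\op{CZ}_\tau(\gamma^q)$; the writhe of the braid formed by these ends is then bounded in terms of these winding numbers and the linking numbers of pairs of eigenfunctions. Turning this estimate into the displayed inequality is an elementary but delicate combinatorial optimization over partitions of $m_i$: in the elliptic case it is precisely the concave/convex lattice-path computation defining $p_\gamma^\pm$, and in the positive and negative hyperbolic cases it reduces to the counting behind \eqref{eqn:php}--\eqref{eqn:nhp}. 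None of this distinguishes a mapping torus from the symplectization of a contact manifold, which is why \cite[Thm.~4.15]{ir} applies; and since the argument uses only the asymptotic behavior of $C$ near the (nondegenerate) orbits of $\alpha$ and $\beta$, the blanket nondegeneracy assumption in \cite{ir} may be weakened to nondegeneracy of $\alpha$ and $\beta$ alone, as noted in the footnote.
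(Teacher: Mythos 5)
Your proposal is correct and follows essentially the same route as the paper, which simply invokes \cite[Thm.~4.15]{ir} (with the footnoted observation that only nondegeneracy of $\alpha$ and $\beta$ is needed), and your sketch of the underlying argument --- relative adjunction formula plus the asymptotic writhe bound and the partition combinatorics --- is the standard proof behind that citation. The only nitpick is that your writhe bound is stated as an ``if and only if,'' whereas only the ``equality only if the partitions are the ECH partitions'' direction is true in general and is all that the proposition requires.
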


Using Proposition~\ref{prop:ie}, one can deduce the following key result. The analogous statement for ECH is part of \cite[Prop.\ 3.7]{bn}, and the proof of the proposition below is the same.

\begin{proposition}
\label{prop:fundamental}
Suppose $J\in\mc{J}(\phi)$ is regular. Let $\alpha$ and $\beta$ be nondegenerate orbit sets with $[\alpha]=[\beta]\in H_1(Y_\phi)$, and let $\mc{C}\in \mc{M}^J(\alpha,\beta)$. If $\Sigma$ is a closed surface of genus $g>0$, assume that $\mathcal{C}$ does not contain any fibers as in Remark~\ref{rem:fibers}. Then:
\begin{description}
\item{(a)} $I(\mc{C})\ge 0$.
\item{(b)} $I(\mc{C})=0$ if and only if $\alpha=\beta$ and $\mc{C}=\R\times\alpha$; see Example~\ref{ex:trivialcylinders}.
\end{description}
\end{proposition}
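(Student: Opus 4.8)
The plan is to reduce Proposition~\ref{prop:fundamental} to the ECH index inequality (Proposition~\ref{prop:ie}) combined with the action filtration (Lemma~\ref{lem:actionfiltration}). First I would decompose the holomorphic current $\mc{C} = \sum_i d_i C_i$ into its somewhere injective components. The components that are fibers of $\R\times Y_\phi \to \R\times S^1$ cause trouble for the index inequality, so I would handle them separately: since the ends of $\mc{C}$ are asymptotic to periodic orbits of $\phi$ (which project to $S^1$ with positive degree), and a fiber has no ends, any fiber component contributes a closed surface summand. I would argue that such a closed-surface contribution is incompatible with $I(\mc{C})=0$ (it forces a strictly positive contribution to the relative self-intersection or Chern number terms), so in the $I(\mc{C})=0$ case there are no fiber components; alternatively, one can split off the fiber components and note that they carry positive $\omega_\phi$-area, contradicting the equality case of Lemma~\ref{lem:actionfiltration} once we know $\mc{A}([\mc{C}])=0$.

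For part (a): let $\mc{C}' = \bigcup_i C_i$ be the union of the underlying somewhere injective non-fiber curves, regarded as a single (disconnected) curve with no multiply covered components between some orbit sets $\alpha'$ and $\beta'$. Applying Proposition~\ref{prop:ie} to $\mc{C}'$ gives $\op{ind}(\mc{C}') \le I(\mc{C}')$. Since $J$ is regular and no $C_i$ is a fiber, each $C_i$ is cut out transversely, and a standard argument in the foundations of ECH/PFH — using that $\mc{C}'$ lives in a moduli space of the expected dimension and that $\R$-translation acts on this space — shows $\op{ind}(C_i)\ge 0$ for each component that is not $\R$-invariant, hence $\op{ind}(\mc{C}')\ge 0$. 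Then one uses the relation between $I(\mc{C})$ and $I(\mc{C}')$: the ECH index of the current with multiplicities differs from that of the underlying curve by nonnegative ``writhe'' and Conley–Zehnder correction terms coming from the multiply covered ends and the multiplicities $d_i$; this is precisely the combinatorics packaged in the proof of \cite[Prop.\ 3.7]{bn}. Putting these together yields $I(\mc{C})\ge \op{ind}(\mc{C}')\ge 0$.

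For part (b): if $I(\mc{C})=0$, then by the chain of inequalities above we must have $\op{ind}(\mc{C}')=I(\mc{C}')=0$, so by the equality clause in Proposition~\ref{prop:ie} every component $C_i$ is embedded and satisfies the ECH partition conditions; moreover all the correction terms relating $I(\mc{C})$ to $I(\mc{C}')$ must vanish, which forces every multiplicity $d_i$ and every end multiplicity to be compatible with $\op{ind}(C_i)=0$. A component with $\op{ind}(C_i)=0$ that is not a fiber is, by the index formula \eqref{eqn:Fredholmindex} together with regularity, forced to be an $\R$-invariant cylinder over a periodic orbit (here one uses that any non-$\R$-invariant curve lies in a moduli space on which $\R$ acts freely, giving $\op{ind}\ge 1$). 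Hence $\mc{C}$ is a union of trivial cylinders with multiplicities, i.e.\ $\mc{C}=\R\times\alpha$ for some orbit set, and asymptotics force $\alpha=\beta$. The converse, that $\R\times\alpha$ has ECH index $0$, is immediate from additivity \eqref{eqn:additive} and the fact that $c_\tau$, $Q_\tau$ of the zero class vanish for a suitable $\tau$ while the Conley–Zehnder sums cancel.

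The main obstacle I anticipate is the bookkeeping in the last two steps: carefully relating $I(\mc{C})$ for the current-with-multiplicity to $I(\mc{C}')$ for the underlying embedded curve, and controlling the ``extra'' index contributed by multiply covered ends via the partition conditions, so as to conclude that equality $I(\mc{C})=0$ forces all multiplicities on hyperbolic-type ends to be $1$ and all elliptic ends to be covers of trivial cylinders. This is exactly the content of the ECH argument in \cite[Prop.\ 3.7]{bn}, and since the statement here says ``the proof of the proposition below is the same,'' I would import that argument essentially verbatim, checking only that the mapping-torus setting (as opposed to the contact setting) does not introduce new fiber-component issues beyond the one flagged above.
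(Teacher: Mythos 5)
Your main chain of reasoning is in fact all the paper itself offers: it gives no independent argument, saying only that the proof is the same as that of \cite[Prop.\ 3.7]{bn}, and that ECH argument is essentially your outline (index inequality, regularity forcing $\op{ind}\ge 1$ for non-$\R$-invariant somewhere injective curves, and a lower bound for the ECH index of a current with multiplicities in terms of the Fredholm indices of its underlying curves). Two cautions on how you phrase it. The middle inequality $I(\mc{C})\ge\op{ind}(\mc{C}')$ is \emph{not} obtained by applying Proposition~\ref{prop:ie} to the reduced curve $\mc{C}'$ and then adding ``nonnegative correction terms''; it is a separate theorem about multiply covered currents (\cite[Thm.\ 5.1]{ir}), and those correction terms (writhe bounds and Conley--Zehnder sums of covers) are precisely the delicate point. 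Citing it as a black box is fair, since the paper does the same, but it does not follow from Proposition~\ref{prop:ie} alone. Also, equality in your chain only yields $\op{ind}(\mc{C}')=0$, not $I(\mc{C}')=0$ as you assert; fortunately $\op{ind}(\mc{C}')=0$ together with regularity (any non-$\R$-invariant somewhere injective component has $\op{ind}\ge 1$) already forces every component to be a trivial cylinder, so the detour through the equality clause of Proposition~\ref{prop:ie} is unnecessary.

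The genuinely incorrect step is your treatment of fiber components. A fiber changes the relative class by $[\Sigma]$, and by \eqref{eqn:ECHindex} (via the index ambiguity formula) this changes $I$ by $\langle c_1(E)+2\op{PD}([\alpha]),[\Sigma]\rangle=\chi(\Sigma)+2d=2-2g+2d$, where $d$ is the degree of $\alpha$ and $g$ the genus of $\Sigma$; this is \emph{not} strictly positive in general (it is $\le 0$ whenever $d\le g-1$), so a fiber does not automatically force a positive contribution to $c_\tau+Q_\tau$. Your alternative argument via Lemma~\ref{lem:actionfiltration} is also unavailable, since the proposition carries no action hypothesis: $I(\mc{C})=0$ does not give $\mc{A}([\mc{C}])=0$. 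The cited ECH argument never meets this issue because symplectizations of contact manifolds contain no closed holomorphic curves; in this paper, fibers are handled separately exactly where they can actually occur, namely in the proof of Lemma~\ref{lem:cms}, where the Gromov-limit-of-cylinders genus constraint forces $\Sigma=S^2$ and the computation from \cite[\S9]{pfh2} gives $I\ge 2$. So your proof should either restrict to currents with no closed components (which is all that is ever used) or, if fibers are allowed, acknowledge that positivity of their contribution requires $d>g-1$ rather than claiming it in general.
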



\section{Cobordisms between Hamiltonian perturbations}
\label{sec:cobordisms}

Fix a boundary-admissible area-preserving diffeomorphism $\phi:\Sigma\to\Sigma$. We now study four-dimensional symplectic cobordisms between mapping tori of different Hamiltonian perturbations of $\phi$.

Fix smooth maps $H_-, H_+: Y_\phi\to\R$ with $H_- \le H_+$, and assume that $(H_\pm)_t$ is locally constant near $\partial\Sigma$ for each $t\in S^1$. 

\subsection{Cobordism setup}

Let $\pi:\R\times Y_\phi\to\R\times S^1$ denote the projection. If $H:\R\times Y_\phi\to\R$ is a smooth map, let $H_{s,t}$ denote the restriction of $H$ to the fiber $\pi^{-1}(s,t)\in\R\times Y_\phi$, and let $X_{H_{s,t}}$ denote the associated Hamiltonian vector field on the fiber.

\begin{definition}
\label{def:monotonehomotopy}
A {\em monotone homotopy\/} from $H_-$ to $H_+$ is a smooth map $H:\R\times Y_\phi\to\R$ such that:
\begin{description}
\item{(a)}
There exists $s_0>0$ such that for every $y\in Y_\phi$ we have
\[
H(s,y)=\left\{\begin{array}{cl} H_+(y), & s\ge s_0,\\
H_-(y), & s \le -s_0.
\end{array}\right.
\]
\item{(b)}
If $s_1\le s_2$ and $y\in Y_\phi$ then $H(s_1,y) \le H(s_2,y)$.
\item{(c)}
$H_{s,t}$ is locally constant near the boundary of each fiber.
\end{description}
\end{definition}

We now make the following extension of Definition~\ref{def:Jphi}.

\begin{definition}
\label{def:Jcobnew}
Let $\mathcal{J}(\phi,H_+,H_-)$ denote the set of almost complex structures $J$ on $\R\times Y_\phi$ such that:
\begin{description}
\item{(a)}
The almost complex structure $J$ sends the vertical tangent bundle $E$ to itself, rotating positively with respect to the symplectic form $\omega$.
\item{(b)}
Equation \eqref{eqn:Jboundary} holds near $\R$ cross the boundary of each $\Sigma$ fiber.
\item{(c)}
There is a monotone homotopy $H$ from $H_-$ to $H_+$ such that
\begin{equation}
\label{eqn:Floer}
J(\partial_s) = \partial_t + X_{H_{s,t}}.
\end{equation}
\end{description}
\end{definition}

\begin{remark}
As a special case, if $H_+=H_-=0$, then $\mathcal{J}(\phi,H_+,H_-)=\mathcal{J}(\phi)$.
\end{remark}

\begin{remark}
\label{rem:holoproj}
Suppose that $J\in \mathcal{J}(\phi,H_+,H_-)$. Then it follows from (a) and (c) in Definition~\ref{def:Jcobnew} that the projection $\pi: \R\times Y_\phi\to\R\times S^1$ is holomorphic, with respect to $J$ and the standard almost complex structure on $\R\times S^1$ sending $\partial_s\mapsto\partial_t$.
\end{remark}

Next, define diffeomorphisms
\[
g_{H_\pm} = \op{id}_\R\times f_{H_\pm}  :\R\times Y_\phi \stackrel{\simeq}{\longrightarrow} \R\times Y_{\phi_{H_\pm}}.
\]

\begin{remark}
\label{rem:extending}
Suppose that $J\in \mathcal{J}(\phi,H_+,H_-)$, and let $s_0\in\R$ be as in Definition~\ref{def:monotonehomotopy} for the associated monotone homotopy $H$. Then it follows from equation \eqref{eqn:Floer} that there are almost complex structures $J_\pm\in\mathcal{J}(\phi_{H_\pm})$ such that
\[
J = \left\{\begin{array}{cl}
g_{H_+}^*J_+, & s\ge s_0,\\
g_{H_-}^*J_-, & s\le -s_0.
\end{array}
\right.
\]
\end{remark}


\subsection{Holomorphic curves in cobordisms}

\begin{definition}
\label{def:curvescobnew}
Let $J\in\mc{J}(\phi,H_+,H_-)$. If $\alpha^+$ and $\alpha^-$ are nondegenerate orbit sets for $\phi_{H_+}$ and $\phi_{H_-}$ respectively, let $\mc{M}^J(\alpha^+,\alpha^-)$ denote the set of $J$-holomorphic currents $\mc{C}$ in $\R\times Y_\phi$ such that:
\begin{itemize}
\item $\mc{C}$ is asymptotic as a current to $g_{H_+}^{-1}(\R\times\alpha^+)$ as $s\to+\infty$.
\item $\mc{C}$ is asymptotic as a current to $g_{H_-}^{-1}(\R\times\alpha^-)$ as $s\to-\infty$.
\end{itemize}
\end{definition}

Note that for any $\mc{C}\in\M^J(\alpha^+,\alpha^-)$ the projection of $\mathcal{C}$ to $Y_\phi$ gives a well-defined relative homology class
\[
[\mc{C}]\in H_2(Y_\phi,f_{H_+}^{-1}(\alpha^+),f_{H_-}^{-1}(\alpha^-)),
\]
Given $Z\in H_2(Y_\phi,f_{H_+}^{-1}(\alpha^+),f_{H_-}^{-1}(\alpha^-))$, we define
\[
\M^J(\alpha^+,\alpha^-,Z) = \left\{\mc{C}\in\M^J(\alpha^+,\alpha^-) \mid [\mc{C}]=Z\right\}.
\]
We also define the action difference
\begin{equation}
\label{eqn:adcob}
\mathcal{A}(Z) = \int_Z\omega_\phi + \int_{f_{H_+}^{-1}(\alpha^+)}H_+dt - \int_{f_{H_-}^{-1}(\alpha^-)}H_-dt.
\end{equation}

\begin{remark}
\label{rem:ade}
In the special case where $H_+=H_-=H:Y_\phi\to\R$, we have $(f_H)_*Z\in H_2(Y_{\phi_H},\alpha^+,\alpha^-)$, and \eqref{eqn:adcob} reverts to the previous notion of action difference $\mathcal{A}((f_H)_*Z)$ as in Definition~\ref{def:ad}. This is because $f_H^*\omega_{\phi_H} = \omega_\phi + dH\wedge dt$.
\end{remark}

We have the following generalization of Lemma~\ref{lem:actionfiltration}.

\begin{lemma}
\label{lem:stokesnew}
If $\mc{C}\in\M^J(\alpha^+,\alpha^-)$, then the action difference
\begin{equation}
\label{eqn:stokesnew}
\mc{A}([\mc{C}]) \ge  0.
\end{equation}
\end{lemma}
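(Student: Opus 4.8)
The plan is to mimic the proof of Lemma~\ref{lem:actionfiltration}, using Stokes' theorem to express the action difference \eqref{eqn:adcob} as an integral over $\mc{C}$ of a closed $2$-form that is pointwise nonnegative on $J$-holomorphic curves. The key observation is the identity from Remark~\ref{rem:ade}: if we pull back the form $\omega_{\phi_{H_\pm}}$ via $f_{H_\pm}$, we get $\omega_\phi + dH_\pm \wedge dt$. On the cobordism $\R\times Y_\phi$, the natural $2$-form to integrate is not $\omega_\phi$ itself but rather $\omega_\phi + d(\beta \wedge dt)$ (equivalently $\omega_\phi + d\beta \wedge dt$, writing $\beta$ for a suitable interpolating function), where $\beta:\R\times Y_\phi\to\R$ is chosen to agree with $H_+$ for $s\ge s_0$, with $H_-$ for $s\le -s_0$, and to be monotone in $s$ in between — for instance one can simply take $\beta = H$, the monotone homotopy associated to $J$ via Definition~\ref{def:Jcobnew}(c).

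First I would set $\Omega = \omega_\phi + dH\wedge dt$ where $H$ is the monotone homotopy from Definition~\ref{def:Jcobnew}(c), pulled back to $\R\times Y_\phi$ in the obvious way. This is a closed $2$-form on $\R\times Y_\phi$. Next I would verify, via Stokes' theorem applied to $\mc{C}$ truncated at large $|s|$ and using the asymptotic conditions in Definition~\ref{def:curvescobnew} together with the normal-form description of $J$ near $s=\pm s_0$ from Remark~\ref{rem:extending}, that $\int_{\mc{C}}\Omega = \mc{A}([\mc{C}])$ as defined in \eqref{eqn:adcob}. Concretely, on the region $s\ge s_0$ one has $\mc{C} = g_{H_+}^{-1}(\text{something asymptotic to }\R\times\alpha^+)$, and $g_{H_+}^*\omega_{\phi_{H_+}} = \Omega|_{s\ge s_0}$, so the contribution of that end limits to $\int_{f_{H_+}^{-1}(\alpha^+)} H_+\,dt$, and similarly at the negative end — the signs working out to match \eqref{eqn:adcob}. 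The boundary terms along $\R\times\partial\Sigma$ vanish because $H_{s,t}$ is locally constant there and the curves stay away by the standard maximum-principle/boundary-behavior arguments implicit in Definition~\ref{def:Jcobnew}(b).

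Then I would check pointwise nonnegativity of $\Omega$ on $\mc{C}$. It suffices to treat each somewhere injective component $C_i$. Since $\mc{C}$ is $J$-holomorphic and $\pi$ is holomorphic by Remark~\ref{rem:holoproj}, the tangent plane $T_pC_i$ projects either isomorphically or to zero under $\pi$. On $E$, $J$ rotates positively with respect to $\omega$ (Definition~\ref{def:Jcobnew}(a)), so $\omega_\phi$ restricted to $E$-directions is nonnegative on $J$-complex lines; the term $dH\wedge dt$ pairs with a $J$-complex plane using $J\partial_s = \partial_t + X_{H_{s,t}}$ and the monotonicity $\partial_s H \ge 0$, which is exactly the point of condition (b) in Definition~\ref{def:monotonehomotopy} — this is what forces the relevant contribution to have the right sign. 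Assembling these gives $\Omega|_{T_pC_i}\ge 0$, hence $\int_{\mc{C}}\Omega \ge 0$, which is \eqref{eqn:stokesnew}.

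The main obstacle, and the step deserving the most care, is the Stokes-theorem bookkeeping at the two ends: one must be sure that the boundary integrals over the asymptotic orbit sets converge to the correct combination $\int_{f_{H_+}^{-1}(\alpha^+)}H_+\,dt - \int_{f_{H_-}^{-1}(\alpha^-)}H_-\,dt$ with the signs as in \eqref{eqn:adcob}, using that the orbits are nondegenerate so the convergence of $\mc{C}$ to the trivial cylinders (after applying $g_{H_\pm}$) is exponential. The interior monotonicity term is genuinely a nonnegative contribution rather than merely a harmless error, so unlike in Lemma~\ref{lem:actionfiltration} one does not expect a clean equality-case characterization here, and indeed the statement of Lemma~\ref{lem:stokesnew} only claims the inequality.
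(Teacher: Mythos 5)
Your proposal is correct and is essentially the paper's argument: the paper likewise reduces to a somewhere injective component, uses the parametrized Floer equation $\partial_s v + J(\partial_t v - X_{H_{s,t}})=0$ together with positivity of $\omega$ on $J$-complex lines in $E$ and the monotonicity $\partial_s H\ge 0$ to get the pointwise inequality, and then integrates; your packaging of this as nonnegativity of the closed form $\omega_\phi + d(H\,dt)$ plus Stokes at the ends is the same computation in global form. No substantive gap beyond the standard asymptotic-convergence bookkeeping, which the paper also leaves implicit.
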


\begin{proof}
This is a variant of a standard calculation in Hamiltonian Floer theory; see e.g.\ \cite[Prop.\ 11.1.2]{ad}.

By linearity, we may assume without loss of generality that $\mc{C}$ consists of a single somewhere injective $J$-holomorphic curve $u:C\to\R\times Y_\phi$. Let $z\in C$, and assume that $z$ is not one of the (at most finitely many) ramification points of the composition $\pi\circ u: C \to \R\times S^1$.  Then we can use the coordinates $s$ and $t$ on $\R\times S^1$ as local coordinates on $C$ near $z$ (where we assume that $t$ has been locally lifted from $\R/\Z$ to $\R$). Near $z$, the map $u$ has the form
\[
u(s,t) = (s,[(t,v(s,t))])
\]
where $v(s,t)\in\Sigma$. By equation \eqref{eqn:Floer}, $v$ satisfies the parametrized Floer equation
\begin{equation}
\label{eqn:pfe}
\partial_s v + J(\partial_tv - X_{H_{s,t}}) = 0.
\end{equation}
By condition (a) in Definition~\ref{def:Jcobnew}, the above equation, the definition of a Hamiltonian vector field, the chain rule, and condition (b) in Definition~\ref{def:monotonehomotopy}, we have
\[
\begin{split}
0 & \le
\omega(\partial_sv, J\partial_sv)\\
& = \omega(\partial_sv,\partial_tv) - \omega(\partial_sv,X_{H_{s,t}})\\
&= \omega(\partial_sv,\partial_tv) + dH_{s,t}(\partial_sv)\\
&= \omega(\partial_sv,\partial_tv) + \partial_s(H\circ u) - \frac{\partial H}{\partial s}\circ u\\
& \le \omega(\partial_sv,\partial_tv) + \partial_s(H\circ u).
\end{split}
\]

It follows from the above inequality that on all of $C$ we have
\[
u^*\omega_\phi = \left(-\partial_s(H\circ u) + k\right)ds\wedge dt
\]
for some nonnegative function $k:C\to\R^{\ge 0}$. Integrating over $C$, we obtain
\[
\begin{split}
\int_Cu^*\omega_\phi &\ge \int_C\left(-\partial_s(H\circ u)\right)ds\wedge dt\\
&= \int_{f_{H_-}^{-1}(\alpha^-)} H_- dt - \int_{f_{H_+}^{-1}(\alpha^+)} H_+ dt.
\end{split}
\]
By equation \eqref{eqn:adcob}, this proves the inequality \eqref{eqn:stokesnew}.
\end{proof}

For compactness arguments, we will need:

\begin{lemma}
\label{lem:avoidboundarynew}
If $\mc{C}\in\M^J(\alpha^+,\alpha^-)$, then $\mc{C}$ does not intersect the neighborhood of $\partial (\R\times Y_\phi)$ in which \eqref{eqn:phiboundary} and \eqref{eqn:Jboundary} hold.
\end{lemma}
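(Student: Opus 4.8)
The plan is to exploit the explicit normal form for $\phi$, $J$, and the homotopy $H$ near $\partial\Sigma$, together with the holomorphicity of the projection $\pi$ established in Remark~\ref{rem:holoproj}, and derive a contradiction from a maximum-principle argument. Near a boundary component we have coordinates $(x,y)\in(-\epsilon,0]\times(\R/\Z)$ on $\Sigma$ in which $\phi(x,y)=(x,y+\theta)$ for an irrational $\theta$, and by hypothesis $(H_\pm)_t$ — hence, after shrinking $\epsilon$, the whole monotone homotopy $H_{s,t}$ — is locally constant in this neighborhood, so the Hamiltonian vector fields $X_{H_{s,t}}$ vanish there. Thus on the region in question the equation \eqref{eqn:Floer} reduces to $J(\partial_s)=\partial_t$, and combined with $J\partial_x=\partial_y$ from \eqref{eqn:Jboundary}, the almost complex structure $J$ agrees near the boundary with the product complex structure on $\R_s\times S^1_t\times(-\epsilon,0]_x\times (\R/\Z)_y$ in which $(s,t)$ and $(x,y)$ are each holomorphic coordinate pairs. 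The key point is that in these coordinates the function $x$, extended to the mapping torus (it is well-defined since $\phi$ preserves $x$), is a pluriharmonic — indeed the real part of the holomorphic function $x+iy$ — function on the boundary neighborhood of $\R\times Y_\phi$.

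First I would fix the neighborhood $\mc{N}$ of $\partial(\R\times Y_\phi)$ on which \eqref{eqn:phiboundary} and \eqref{eqn:Jboundary} hold and on which $H$ is locally constant; we may take it of the form $\R\times S^1\times(-\epsilon,0]\times(\R/\Z)$ as above. Suppose for contradiction that some $\mc{C}\in\M^J(\alpha^+,\alpha^-)$ meets $\mc{N}$. Since the orbit sets $\alpha^\pm$ lie in the interior (the periodic orbits $\alpha_i$ are simple periodic orbits of $\phi_{H_\pm}$, and near $\partial\Sigma$ the map $\phi_{H_\pm}$ equals $\phi$, which is a fixed-point-free irrational rotation there, so there are no periodic orbits in $\mc{N}$), the ends of $\mc{C}$ are disjoint from $\mc{N}$ for $|s|$ large; more carefully, for $s\ge s_0$ and $s\le -s_0$ the current is $g_{H_\pm}^{-1}$ of a genuine symplectization current for $\phi_{H_\pm}$, whose support near the boundary is governed by Lemma~\ref{lem:avoidboundarynew}'s analogue — but to avoid circularity I would instead argue directly that $\mc{C}\cap\mc{N}$, if nonempty, contains a point where $x$ attains a local maximum along $\mc{C}$. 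Indeed, restricting the coordinate function $x$ to a somewhere-injective component $u:C\to\R\times Y_\phi$ of $\mc{C}$, on $u^{-1}(\mc{N})$ the composition $x\circ u$ is harmonic with respect to the conformal structure $j$ on $C$, because $x$ is pluriharmonic for $J$ on $\mc{N}$ and $u$ is $(j,J)$-holomorphic. The set $u^{-1}(\mc{N})$ is a (possibly non-compact) subsurface of $C$, but $x\circ u<0$ there and $x\circ u\to 0$ exactly on $\partial C\cap u^{-1}(\mc{N})=\emptyset$ — there is no domain boundary since $C$ is a punctured closed surface — and $x\circ u$ extends continuously by strictly negative values on the topological boundary of $u^{-1}(\mc{N})$ in $C$ (where $x\circ u$ hits $-\epsilon/2$, say, after shrinking the relevant region).

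The main obstacle, and the crux of the argument, is to package this into a clean contradiction: on the open set $\{x\circ u > -\epsilon/2\}\subset C$ — which is nonempty by assumption and has the property that $x\circ u$ is harmonic there and equal to $-\epsilon/2$ on its frontier in $C$ — I would like to apply the maximum principle to conclude $x\circ u\le-\epsilon/2$ everywhere, a contradiction. The subtlety is that this open set need not be precompact in $C$ because $C$ has punctures; but each puncture of $C$ is asymptotic to a periodic orbit of $\phi_{H_\pm}$, all of which lie in the region $x\le-\epsilon$ away from the boundary collar, so near each puncture $x\circ u$ converges to a value $\le -\epsilon < -\epsilon/2$, and hence the connected component of $\{x\circ u>-\epsilon/2\}$ under consideration is in fact precompact. (Alternatively, one can invoke the asymptotic analysis of \cite{siefring} to control $u$ near the punctures.) With precompactness in hand, the strong maximum principle for the harmonic function $x\circ u$, together with the boundary value $-\epsilon/2$, forces $x\circ u$ to be constant equal to $-\epsilon/2$ on a component, contradicting harmonicity and the open mapping property — or more simply, forces the supremum of $x\circ u$ to be $-\epsilon/2$, contradicting that $\mc{C}$ meets $\mc{N}=\{x>-\epsilon\}$ with $x$ values approaching $0$. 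This rules out $\mc{C}\cap\mc{N}\ne\emptyset$ and completes the proof. I expect the bookkeeping around the punctures and the precise choice of nested collar neighborhoods to be the only delicate part; the maximum-principle core is standard (cf.\ the boundary-avoidance arguments in ECH of contact manifolds with convex/concave boundary).
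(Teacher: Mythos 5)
Your argument is essentially correct, but it takes a genuinely different route from the paper. The paper's proof is homological: it slices $\mc{C}$ with $\R\times T$ for a generic torus $T=\{x=\mathrm{const}\}$ in the collar, notes that this transverse intersection is nullhomologous in $\R\times T$ (because the asymptotic orbit sets avoid the collar, so the portion of $\mc{C}$ on the boundary side of $T$ is compact with boundary on $\R\times T$), and then cites \cite[Lem.\ 3.11]{pfh3}, an intersection-positivity/winding argument exploiting the irrationality of $\theta$, to conclude the intersection is empty. You instead observe that on the collar the monotone homotopy is locally constant, so \eqref{eqn:Floer} degenerates to $J\partial_s=\partial_t$, which together with \eqref{eqn:Jboundary} makes $J$ a product structure and $x$ harmonic along $\mc{C}$; a maximum principle, with precompactness of the superlevel sets supplied by the nondegenerate asymptotics (the limiting orbits avoid the collar since $\phi_{H_\pm}=\phi$ is a fixed-point-free irrational rotation there), then excludes the collar. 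Your route is self-contained and makes explicit where local constancy of $H$ near $\partial\Sigma$ enters; the paper's route is shorter by citation and works directly with currents, needing only the homological fact about the slice rather than analytic control of each end. Two small points to tidy in your write-up: let the threshold vary, i.e.\ run the argument with $\{x\circ u>c\}$ for every $c$ in the (possibly shrunken) collar rather than only $c=-\epsilon/2$, so that you exclude the whole collar on which your normal form holds --- the shrinking to where $H_{s,t}$ is locally constant is harmless, since every use of the lemma is a compactness argument that only needs avoidance of some fixed neighborhood of the boundary, and the paper's citation implicitly requires the same normal form near the slicing torus; and dispose of possible closed components, which when $\partial\Sigma\neq\emptyset$ cannot occur by Lemma~\ref{lem:carlemannew}(b) because fibers have boundary, while for $\partial\Sigma=\emptyset$ the lemma is vacuous.
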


\begin{proof}
Let $T\subset Y_\phi$ be a torus of the form $x=\op{constant}$ in the coordinates \eqref{eqn:phiboundary}. We can choose $x$ generically so that $\mc{C}$ intersects $(\R\times T)$ transversely as a sum of closed one-dimensional submanifolds with integer weights. Since this intersection is nullhomologous in $\R\times T$, it follows from \cite[Lem.\ 3.11]{pfh3} that it is empty.
\end{proof}

We also note:

\begin{lemma}
\label{lem:carlemannew}
Let $J\in\mc{J}(\phi,H_+,H_-)$.
\begin{description}
\item{(a)}
If $C\in\mathcal{M}^J(\alpha^+,\alpha^-)$ is a somewhere injective $J$-holomorphic cylinder, then $C$ is transverse to the fibers of the projection $\pi:\R\times Y_\phi\to\R\times S^1$.
\item{(b)}
If $C$ is a somewhere injective closed $J$-holomorphic curve, then $C$ is a fiber.
\end{description}
\end{lemma}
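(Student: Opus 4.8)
The plan is to use the fact recorded in Remark~\ref{rem:holoproj} that for $J\in\mc{J}(\phi,H_+,H_-)$ the projection $\pi\colon\R\times Y_\phi\to\R\times S^1$ is holomorphic (for the standard complex structure $\partial_s\mapsto\partial_t$ on $\R\times S^1$), so that for any $J$-holomorphic curve $u\colon C\to\R\times Y_\phi$ the composition $\pi\circ u\colon C\to\R\times S^1$ is holomorphic. Everything then follows from elementary complex analysis on Riemann surfaces; this is the same reasoning already indicated in Remark~\ref{rem:cylinders} for the symplectization case $H_+=H_-=0$, with only cosmetic changes.

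For (b), suppose $C$ is closed. The $\R$-coordinate $s\circ u$ is locally the real part of the holomorphic map $\pi\circ u$ (valued in $\R\times S^1\cong\C/2\pi i\Z$), hence harmonic on the closed surface $C$, hence constant. Thus $\pi\circ u$ has image in a single circle $\{s_0\}\times S^1$, and being holomorphic with one-dimensional image it is constant, say $\pi\circ u\equiv(s_0,t_0)$. Therefore $u(C)$ lies in the fiber $F=\pi^{-1}(s_0,t_0)$, a copy of $\Sigma$ with the almost complex structure $J|_E$. In the boundary collar \eqref{eqn:phiboundary}, $x+iy$ is a holomorphic coordinate on $F$, so $x\circ u$ is harmonic; by the maximum principle it cannot attain the value $0$ unless it is identically $0$, which is impossible for a nonconstant holomorphic map, so $u(C)$ misses $\partial\Sigma$. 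Then $u$ is a nonconstant holomorphic map from the closed surface $C$ into the open surface $\op{int}(\Sigma)$, whose image is therefore open, compact and connected, hence a connected component of $\op{int}(\Sigma)$; this is possible only if $\Sigma$ is closed, in which case $u(C)=\Sigma$, and since $C$ is somewhere injective $u$ is a biholomorphism onto $F$, i.e.\ $C$ is a fiber. (When $\partial\Sigma\neq\emptyset$ the argument shows there are no closed somewhere injective curves, consistent with the statement since the fibers then also have boundary.)

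For (a), let $C\in\mc{M}^J(\alpha^+,\alpha^-)$ be a somewhere injective cylinder; it necessarily has one positive puncture and one negative puncture, asymptotic to covers of periodic orbits of $\phi_{H_+}$ and $\phi_{H_-}$ respectively. By Remark~\ref{rem:extending}, near the positive puncture $J$ agrees with $g_{H_+}^*J_+$ for some $J_+\in\mc{J}(\phi_{H_+})$, so the asymptotic description in \S\ref{sec:curves} applies: in a holomorphic coordinate $w$ centered at the (compactified) puncture, $\pi\circ u$ has the form $w\mapsto w^{k_+}+O(w^{k_++1})$, where $k_+\ge 1$ is the multiplicity of the positive end times the period of the underlying simple orbit; likewise with $k_-\ge 1$ at the negative puncture. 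In particular $\pi\circ u$ is proper and extends to a finite branched cover $\bar f\colon S^2\to S^2$ of the conformal compactifications (filling in the two punctures of the cylinder $C$ and the two ends $s=\pm\infty$ of $\R\times S^1$). The preimage of the point $s=+\infty$ is the single point $p_+$, where the local degree is $k_+$, so $\deg\bar f=k_+$; similarly $\deg\bar f=k_-$; call this common value $d$. Riemann--Hurwitz gives $2=2d-\sum_q(e_q-1)$, where the sum is over branch points of $\bar f$; the punctures $p_+$ and $p_-$ already account for $(d-1)+(d-1)=2d-2$, so $\bar f$ has no branch points in the interior of $C$. Hence $d(\pi\circ u)$ is everywhere injective on $C$, which is exactly the statement that $C$ is transverse to the fibers of $\pi$.

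There is no serious obstacle here; the only point requiring a little care is justifying that $\pi\circ u$ extends to an honest finite branched cover of the compactifications and that the local degrees at the punctures are exactly $k_\pm$, which is where the precise exponential-decay asymptotics of $J$-holomorphic curves near nondegenerate periodic orbits (combined with Remark~\ref{rem:extending} to handle the cylindrical ends of the cobordism) enter. In part (b) the only subtlety is excluding the possibility that the image meets $\partial\Sigma$, which is handled by the maximum principle in the boundary collar.
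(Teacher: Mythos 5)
Your proposal is correct and follows essentially the same route as the paper: both use the holomorphicity of $\pi$ from Remark~\ref{rem:holoproj}, deducing (a) from the fact that the branched cover $\pi|_C\colon C\to\R\times S^1$ can have no ramification points when $C$ is a cylinder (your Riemann--Hurwitz computation on the compactified $S^2$ is just a more explicit version of this), and (b) from harmonicity/boundedness of the $s$-coordinate forcing $\pi\circ u$ to be constant. Your extra care about properness of $\pi\circ u$ at the ends and about excluding $\partial\Sigma$ via the maximum principle only fills in details the paper leaves implicit.
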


\begin{proof}
By Remark~\ref{rem:holoproj}, the restriction of $\pi$ to $C$ defines a branched cover $C\to\R\times S^1$. Since $C$ is a cylinder, this branched cover has no ramification points, and (a) follows. Part (b) follows because a bounded holomorphic function on a compact Riemann surface is constant.
\end{proof}

Finally, similarly to Proposition~\ref{prop:ie}, as a special case of \cite[Thm.\ 4.15]{ir}, we have the following index inequality:

\begin{proposition}
\label{prop:ie2new}
Let $J \in \mc{J}(\phi,H_+,H_-)$, let $\alpha^\pm$ be nondegenerate orbit sets for $\phi_{H_\pm}$, and suppose that $C\in\mc{M}^J(\alpha^+,\alpha^-)$ does not have any multiply covered components. Then
\[
\op{ind}(C) \le I(C),
\]
with equality only if $C$ is embedded and satisfies the ECH partition conditions as in Definition~\ref{def:pc}.
\end{proposition}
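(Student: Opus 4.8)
The plan is to recognize Proposition~\ref{prop:ie2new} as a special case of the index inequality \cite[Thm.\ 4.15]{ir}, once $(\R\times Y_\phi, J)$ is identified with the completion of a symplectic cobordism of the kind treated there. Recall that $Y_\phi$ carries the stable Hamiltonian structure with one-form $dt$, two-form $\omega_\phi$, and Reeb vector field $\partial_t$, and that by Remark~\ref{rem:ade} the diffeomorphism $f_{H_\pm}$ pulls back the corresponding structure on $Y_{\phi_{H_\pm}}$ to the stable Hamiltonian structure $(dt,\,\omega_\phi + dH_\pm\wedge dt)$ on $Y_\phi$. The monotone homotopy $H$ in Definition~\ref{def:Jcobnew}(c) interpolates between these two structures, and by Remark~\ref{rem:extending} the almost complex manifold $(\R\times Y_\phi, J)$ agrees, outside of $[-s_0,s_0]\times Y_\phi$ and via $g_{H_\pm}$, with the positive and negative symplectization ends of $(\R\times Y_{\phi_{H_\pm}}, J_\pm)$ for suitable $J_\pm\in\mc{J}(\phi_{H_\pm})$. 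Thus $(\R\times Y_\phi, J)$ is the completion of a compact symplectic cobordism with positive end $Y_{\phi_{H_+}}$ and negative end $Y_{\phi_{H_-}}$, carrying an almost complex structure of the kind considered in \cite{ir}.

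With this identification in hand, I would check that the objects appearing in the two formulations coincide. A current $\mc{C}\in\mc{M}^J(\alpha^+,\alpha^-)$ with no multiply covered components becomes, under the end identifications, a finite disjoint union of distinct somewhere injective holomorphic curves in the completed cobordism, positively asymptotic to $\alpha^+$ and negatively asymptotic to $\alpha^-$; by Lemma~\ref{lem:avoidboundarynew} these curves stay away from the neighborhood of $\partial\Sigma$ where \eqref{eqn:phiboundary} and \eqref{eqn:Jboundary} hold, so $\partial\Sigma$ is irrelevant to the argument and one may freely modify $\phi$ outside a compact subset of the interior to place oneself literally in the setting of \cite{ir}. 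The relative homology class, the relative first Chern class $c_\tau$, the relative self-intersection number $Q_\tau$, and the Conley--Zehnder terms entering \eqref{eqn:Fredholmindex} and \eqref{eqn:ECHindex} are defined by the same formulas as in \cite{ir}, so $\op{ind}(C)$ and $I(C)$ agree with the quantities there, and \cite[Thm.\ 4.15]{ir} yields $\op{ind}(C)\le I(C)$ with equality only if $C$ is embedded and satisfies the ECH partition conditions of Definition~\ref{def:pc}.

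As with Proposition~\ref{prop:ie}, the one point requiring comment is that \cite[Thm.\ 4.15]{ir} is stated assuming that both ends are nondegenerate (all periodic orbits, including multiple covers, nondegenerate), whereas here $\phi_{H_\pm}$ need not be nondegenerate; but, exactly as in the footnote to Proposition~\ref{prop:ie}, that hypothesis is used only to control the curves under consideration, and it suffices that the orbit sets $\alpha^\pm$ are nondegenerate, which is assumed. I expect the only real work to be bookkeeping: verifying that the Floer-type almost complex structures of $\mc{J}(\phi,H_+,H_-)$ meet the admissibility requirements of \cite{ir} and that the two incarnations of the ECH index and the Fredholm index genuinely match, rather than any new geometric estimate --- the analytic heart of the inequality is entirely contained in \cite{ir}.
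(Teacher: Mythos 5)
Your proposal is correct and follows exactly the paper's route: the paper offers no independent argument for Proposition~\ref{prop:ie2new}, deducing it as a special case of \cite[Thm.\ 4.15]{ir} just as for Proposition~\ref{prop:ie}, with the same observation that nondegeneracy of the orbit sets $\alpha^\pm$ (rather than of all of $\phi_{H_\pm}$) suffices. Your additional bookkeeping --- identifying $(\R\times Y_\phi,J)$ with a completed cobordism via $g_{H_\pm}$ and Remark~\ref{rem:extending}, and using Lemma~\ref{lem:avoidboundarynew} to sidestep $\partial\Sigma$ --- is a faithful elaboration of that citation, not a different proof.
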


Here $\op{ind}(C)$ and $I(C)$ are defined analogously to \eqref{eqn:Fredholmindex} and \eqref{eqn:ECHindex}; see \cite[\S4]{ir} for more details of these notions in symplectic cobordisms. Again, a standard transversality argument shows that a generic $J\in\mathcal{J}(\phi,H_+,H_-)$ is regular, where ``regular'' is defined analogously to Definition~\ref{def:regular}.


\section{Proof of the main theorem}

We are now prepared to begin the proof of Theorem~\ref{thm:main}.

\subsection{Moduli spaces of cylinders in cobordisms}
\label{sec:key}

Fix a boundary-admissible area-preserving diffeomorphism $\phi:\Sigma\to\Sigma$. In this subsection we consider Hamiltonians $H:Y_\phi\to\R$ with the standing assumptions that $(H_\pm)_t$ is locally constant near $\partial \Sigma$ for each $t\in S^1$, and the map $\phi_H$ is nondegenerate.

\begin{definition}
\label{def:M0}
Suppose $H_-,H_+:Y_\phi\to\R$ are Hamiltonians as above, and let $J\in\mc{J}(\phi,H_+,H_-)$. Let $\alpha^+$ and $\alpha^-$ be nondegenerate orbit sets for $\phi_{H_+}$ and $\phi_{H_-}$ respectively such that $[f_{H_+}^{-1}(\alpha^+)]=[f_{H_-}^{-1}(\alpha^-)]\in H_1(Y_\phi)$. Let $Z\in H_2(Y_\phi,f_{H_+}^{-1}(\alpha^+),f_{H_-}^{-2}(\alpha^-))$ with ECH index
\[
I(Z) = 0.
\]
Define $\mc{M}^J_0(\alpha^+,\alpha^-,Z)$ to be the set of $J$-holomorphic currents $C\in \mc{M}^J(\alpha^+,\alpha^-,Z)$ such that:
\begin{itemize}
\item
$C$ does not have any multiply covered components.
\item
Each component of $C$ is a cylinder.
\end{itemize}
Given $\epsilon> 0$, let
\[
\mc{M}_0^{J,\epsilon}(\alpha^+,\alpha^-) = \coprod_{I(Z)=0,\; \mc{A}(Z)\le \epsilon} \mc{M}^J_0(\alpha^+,\alpha^-,Z).
\]
\end{definition}

\begin{lemma}
\label{lem:cms}
In the situation of Definition~\ref{def:M0}, suppose that $J\in\mc{J}(\phi,H_+,H_-)$ extending $J_\pm$ as in Remark~\ref{rem:extending} is regular. Then:
\begin{description}
\item{(a)}
Suppose that at least one of the following hypotheses holds:
\begin{description}
\item
(i)
$J_-$ is regular, $\alpha^+$ is simple, and $\alpha^+$ is $\epsilon$-isolated for $J_+$; see Definition~\ref{def:isolation}.
\item
(ii)
$J_+$ is regular, $\alpha^-$ is simple, and $\alpha^-$ is $\epsilon$-isolated for $J_-$.
\end{description}
Then the set $\mc{M}_0^{J,\epsilon}(\alpha^+,\alpha^-)$ is finite.
\item{(b)}
Suppose that:
\begin{description}
\item{(iii)}
$\alpha^+$ or $\alpha^-$ is simple, and $\alpha^\pm$ is $\epsilon$-isolated for $J_\pm$.
\end{description}
Then for each $Z$ with $I(Z)=0$ and $\mc{A}(Z)\le\epsilon$, the mod 2 cardinality of $\mc{M}_0^J(\alpha^+,\alpha^-,Z)$ does not depend on the choice of regular $J$ extending $J_\pm$.
\end{description}
\end{lemma}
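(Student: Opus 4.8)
The plan is to prove both parts by a compactness argument for holomorphic currents, excluding bubbling and breaking by means of the index inequalities (Propositions~\ref{prop:ie}, \ref{prop:ie2new} and \ref{prop:fundamental}) together with the action bounds of Lemmas~\ref{lem:actionfiltration} and \ref{lem:stokesnew}; this follows the template of the corresponding statements in ECH/PFH (cf.\ \cite[Prop.\ 3.7]{bn}), the one new feature being that the asymptotic orbit sets $\alpha^\pm$ are not assumed generic, so the hypotheses that they are simple and $\epsilon$-isolated must be invoked at the key point. First I would record a rigidity statement: if $\mc{C}\in\mc{M}_0^J(\alpha^+,\alpha^-,Z)$ with $I(Z)=0$, then $\op{ind}(\mc{C})=I(\mc{C})=0$, the current $\mc{C}$ is embedded, and $\mc{C}$ is cut out transversely and hence is an isolated point of its moduli space. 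Indeed, $\mc{C}$ has no multiply covered components, so Proposition~\ref{prop:ie2new} gives $\op{ind}(\mc{C})\le I(\mc{C})=0$; conversely each component is a somewhere injective cylinder which by Lemma~\ref{lem:carlemannew}(a) is transverse to the fibers of $\pi$ and so is not a fiber, whence regularity of $J$ and additivity of the Fredholm index over components give $\op{ind}(\mc{C})\ge 0$. So $\op{ind}(\mc{C})=I(\mc{C})=0$; equality in Proposition~\ref{prop:ie2new} gives embeddedness and the ECH partition conditions, and transversality follows from regularity of $J$.

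For part~(a), I would suppose $\mc{M}_0^{J,\epsilon}(\alpha^+,\alpha^-)$ is infinite and take a sequence of distinct elements $\mc{C}_n$. The bound $\mc{A}([\mc{C}_n])\le\epsilon$ yields a uniform energy bound, so by Gromov compactness for holomorphic currents (cf.\ the proof of Lemma~\ref{lem:isolation}) a subsequence converges to a broken holomorphic current $\mc{C}_\infty$ with some symplectization levels in $\R\times Y_{\phi_{H_+}}$ above, one cobordism level in $\R\times Y_\phi$, and some symplectization levels in $\R\times Y_{\phi_{H_-}}$ below; since the relative homology classes $[\mc{C}_n]$ converge in a discrete set, after a further subsequence $[\mc{C}_n]=Z$ is fixed. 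Now ECH index and action are additive over the levels of $\mc{C}_\infty$, summing to $I(Z)=0$ and to $\mc{A}(Z)\le\epsilon$, and every symplectization level has nonnegative action (Lemma~\ref{lem:actionfiltration}) while the cobordism level has nonnegative action (Lemma~\ref{lem:stokesnew}); hence each level has action at most $\epsilon$. The topmost level, if present, is asymptotic to $\alpha^+$ as $s\to+\infty$, so under hypothesis~(i) it lies in $\mc{M}^{J_+}(\alpha^+,\beta)$ for some $\beta$, is nontrivial, and therefore has action exceeding $\epsilon$ by $\epsilon$-isolation of $\alpha^+$ --- a contradiction; so there is no top level. Simplicity of $\alpha^+$ then rules out multiply covered components in the cobordism level: a multiply covered component with a positive end would carry too much multiplicity at a simple orbit, and one with only negative ends is excluded by the maximum principle for the $s$-coordinate in the region $s\gg0$, where $\pi$ is holomorphic. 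Applying Proposition~\ref{prop:ie2new} to the cobordism level as in the rigidity step, together with Proposition~\ref{prop:fundamental} (using regularity of $J_-$) to give each bottom level nonnegative ECH index, forces the cobordism level to have $\op{ind}=I=0$ and all bottom levels to be trivial cylinders; fiber bubbles are likewise excluded, since a fiber has positive ECH index while the cobordism level cannot have negative index. Thus $\mc{C}_\infty$ is a single cobordism-level current in $\mc{M}_0^J(\alpha^+,\alpha^-,Z)$ --- it is embedded, and a disjoint union of cylinders since each $\mc{C}_n$ is and each component of $\mc{C}_n$ is rigid, so the cylindrical topology cannot degenerate and distinct rigid components cannot collide. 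Case~(ii) is symmetric, with the roles of the two ends exchanged. Then $\mc{C}_n=\mc{C}_\infty$ for large $n$, contradicting distinctness, and (a) follows.

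For part~(b), fix $Z$ with $I(Z)=0$ and $\mc{A}(Z)\le\epsilon$, and run the usual cobordism-of-cobordisms argument. Given regular $J^0,J^1\in\mc{J}(\phi,H_+,H_-)$ both extending the pair $J_\pm$, choose a generic path $\{J^\lambda\}_{\lambda\in[0,1]}$ in $\mc{J}(\phi,H_+,H_-)$ all of whose members extend $J_\pm$; by parametrized transversality (cf.\ \cite{wendl}) the set $\{(\lambda,\mc{C}):\mc{C}\in\mc{M}_0^{J^\lambda}(\alpha^+,\alpha^-,Z)\}$ is a $1$-manifold with boundary $(\{0\}\times\mc{M}_0^{J^0}(\alpha^+,\alpha^-,Z))\sqcup(\{1\}\times\mc{M}_0^{J^1}(\alpha^+,\alpha^-,Z))$. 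Compactness of this $1$-manifold is the point: a sequence in it subconverges over some $\lambda_*$ to a broken current, and since under hypothesis~(iii) both $\alpha^+$ and $\alpha^-$ are simple and $\epsilon$-isolated for $J_\pm$, the action argument of part~(a) --- which needs no regularity of $J_\pm$ --- excludes any nontrivial symplectization level at either end, so there are no symplectization levels and no breaking. A compact $1$-manifold has an even number of boundary points, which is precisely the statement that the number of points of $\mc{M}_0^J(\alpha^+,\alpha^-,Z)$ modulo $2$ is independent of the regular $J$ extending $J_\pm$.

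The step I expect to be the main obstacle is the exclusion of breaking, and within it the claim that the cobordism level of a limiting broken current has no multiply covered components --- this is what licenses applying Proposition~\ref{prop:ie2new} to that level. It is precisely here that the hypotheses that $\alpha^\pm$ be simple and $\epsilon$-isolated do the real work: isolation bounds below by $\epsilon$ the action of any nontrivial holomorphic current asymptotic to $\alpha^\pm$, so the global bound $\mc{A}(Z)\le\epsilon$ collapses the symplectization levels to trivial cylinders, while simplicity of $\alpha^\pm$ --- via the asymptotic behavior of ends of holomorphic curves and the partition conditions of Definition~\ref{def:pc}, cf.\ Lemma~\ref{lem:partitionsdisjoint} --- rules out surviving multiple covers in the cobordism level.
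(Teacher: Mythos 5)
Your proposal follows the paper's proof in all essentials: the rigidity of index-zero cylinder currents via Proposition~\ref{prop:ie2new} plus regularity, Gromov compactness to a broken current, additivity of action and ECH index over levels, the $\epsilon$-isolation hypothesis killing the symplectization levels on the simple side, Proposition~\ref{prop:fundamental} (using regularity of the other limiting almost complex structure) killing the remaining symplectization levels in part (a), simplicity plus the maximum principle for the harmonic $s$-coordinate ruling out multiply covered non-closed components of the cobordism level, and in part (b) the parametrized moduli space with both symplectization sides excluded by the action argument alone, so that no regularity of $J_\pm$ is needed. All of this is the paper's argument.

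The one step whose justification does not hold up as written is the exclusion of fiber components from the limiting cobordism level. You argue that ``a fiber has positive ECH index while the cobordism level cannot have negative index,'' but the first claim is false in general: when $\Sigma$ has genus $g>0$, a fiber has Fredholm index $2-2g\le 0$, is not cut out transversely (Remark~\ref{rem:fibers}), and is explicitly exempted from the regularity condition, and the contribution of the fiber class to the ECH index is $2-2g+2d$ (with $d$ the degree of the orbit sets), which is zero or negative when $g\ge 2$ and $d$ is small. So the index bookkeeping by itself does not rule out fiber bubbles. The missing observation, which is how the paper proceeds, is topological: the limit is a limit of cylinders, and genus cannot appear in the limit, so a fiber component can only occur when $\Sigma=S^2$; only then does one invoke the computation (from \cite[\S9]{pfh2}) that a current containing a fiber has $I\ge 2$, which contradicts the total index being $0$. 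With that genus restriction inserted, your fiber exclusion is correct, and the rest of your argument (including the implicit repetition of the multiple-cover and fiber exclusions in part (b), and the assertion that the limit components remain cylinders, which the paper also leaves implicit) matches the paper's proof.
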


\begin{proof}
(a)
Without loss of generality, condition (i) holds.

By Proposition~\ref{prop:ie2new}, each $C\in \mc{M}_0^{J,\epsilon}(\alpha^+,\alpha^-)$ is a disjoint union of cylinders with Fredholm index $0$. Since $J$ is regular, it follows that such a curve $C$ is isolated in the moduli space of $J$-holomorphic curves. Thus $\mc{M}_0^{J,\epsilon}(\alpha^+,\alpha^-)$ is discrete, and we just need to show that it is compact.

Suppose we are given a sequence of distinct elements in $\mc{M}_0^{J,\epsilon}(\alpha^+,\alpha^-)$. Each such element is a union of cylinders locally satisfying the Floer equation \eqref{eqn:pfe}, and by \eqref{eqn:adcob} we have an upper bound on the integral of $\omega_\phi$ over each cylinder. Then by Gromov compactness as in Hamiltonian Floer theory, cf.\ \cite[\S11.1]{ad}, using Lemma~\ref{lem:avoidboundarynew} to rule out escape to the boundary of $\R\times Y_\phi$, one can pass to a subsequence that converges to a ``broken holomorphic current''. This is a tuple $(\mc{C}_k)_{k_-\le k \le k_+}$ of holomorphic currents where $k_-\le 0 \le k_+$, such that:
\begin{itemize}
\item
There are orbit sets $\alpha^-=\beta^{k_-},\ldots,\beta^0$ for $\phi_{H_-}$ such that if $k<0$ then $\mc{C}_k\in\M^{J_-}(\beta^{k+1},\beta^k)/\R$ is not a sum of trivial cylinders.
\item
$\mc{C}_0\in\mc{M}^J(\gamma^0,\beta^0)$.
\item
There are orbit sets $\gamma^0,\ldots,\gamma^{k_+}=\alpha^+$ for $\phi_{H_+}$ such that if $k>0$ then $\mc{C}_k\in\M^{J_+}(\gamma^k,\gamma^{k-1})/\R$ is not a sum of trivial cylinders.
\end{itemize}
In addition, it follows from Lemma~\ref{lem:carlemannew} (or from the proof of Gromov compactness) that each component of each $\mc{C}_k$ is a cylinder, except that when $\Sigma\simeq S^2$, so that bubbling is possible in Hamiltonian Floer theory, there can be components that are fibers of the projection $\R\times Y_\phi\to\R\times S^1$.

The action difference is additive by Remark~\ref{rem:ade}, so that
\begin{equation}
\label{eqn:ada}
\sum_{i=k_-}^{k_+}\mc{A}([\mc{C}_i]) \le \epsilon.
\end{equation}
By Lemmas~\ref{lem:actionfiltration} and \ref{lem:stokesnew}, we have
$\mathcal{A}([\mc{C}_i])\ge 0$ for each $i$.
If $k_+>0$, then by hypothesis (i), we have $\mc{A}([\mc{C}_{k_+}])>\epsilon$, contradicting \eqref{eqn:ada}. Thus $k_+=0$.

Since $\alpha^+$ is simple, $\mc{C}_0$ does not have any multiply covered components, except possibly for closed components, which by as noted above must be sphere fibers.

If $\mc{C}_0$ has no fibers, then we can apply the index inequality of Proposition~\ref{prop:ie2new} and the regularity of $J$ to conclude that $I(\mc{C}_0)\ge 0$. If $\mc{C}_0$ does contain at least one sphere fiber (possibly multiply covered), write $\mathcal{C}_0=\mathcal{C}_0'+\mathcal{F}$ where $\mathcal{C}_0'$ does not contain any fibers, and $\mathcal{F}$ is a sum of fibers with total multiplicity $m>0$. It then follows from the ``index ambiguity formula'' in \cite[Prop.\ 1.6(d)]{pfh2} that
\[
\begin{split}
I(\mathcal{C}_0) &= I(\mathcal{C}_0') + \langle c_1(E)+2\op{PD}([\alpha^\pm]),m[\Sigma]\rangle\\
&= I(\mathcal{C}_0') + 2m(d+1)
\end{split}
\]
where $d$ denotes the degree of $\alpha^\pm$. Since $I(\mathcal{C}_0')\ge 0$ as above, it follows that $I(\mathcal{C}_0)\ge 2$ in this case\footnote{In fact $I(\mathcal{C}_0)\ge 4$ because $d>0$ since otherwise no fibers could bubble off.}.

The ECH index is additive as in \eqref{eqn:additive}, so that
\[
\sum_{i=k_-}^0 I(\mc{C}_i) = 0.
\]
By Proposition~\ref{prop:fundamental}, $I(\mc{C}_i)>0$ for each $i<0$. We conclude that $k_-=0$ and $I(\mc{C}_0)=0$, and in particular $\mc{C}_0$ contains no fibers. Since $k_-=k_+=0$, the limiting broken holomorphic current is in $\mc{M}_0^{J,\epsilon}(\alpha^+,\alpha^-)$, and this completes the proof of compactness.

(b) Let $J,J'\in\mc{J}(\phi,H_+,H_-)$ be regular extensions of $J_\pm$. Choose a generic one-parameter family $\{J^\tau\}_{\tau\in[0,1]}$ of extensions of $J_\pm$ in $\mc{J}(\phi,H_+,H_-)$ with $J^0=J$ and $J^1=J'$. Define
\[
\mathcal{N} = \coprod_{\tau\in[0,1]}\{\tau\}\times\M^{J^\tau}_0(\alpha^+,\alpha^-,Z).
\]
By a standard transversality argument, if $\{J^\tau\}$ is generic then $\mathcal{N}$ is a one-dimensional manifold with\footnote{The minus sign is merely sentimental as we do not need to worry about orientations here.}
\[
\partial \mathcal{N} = \M^{J'}_0(\alpha^+,\alpha^-,Z) - \M^J_0(\alpha^+,\alpha^-,Z).
\]
To complete the proof of (b), we just need to show that $\mathcal{N}$ is compact.

Any sequence in $\mathcal{N}$ has a subsequence which converges to a pair consisting of a number $\tau\in[0,1]$ and a $J^\tau$-holomorphic broken current as in part (a). Since hypothesis (iii) holds, by the action argument in part (a), the broken holomorphic current just consists of a single holomorphic current $\mc{C}_0\in\M^{J^\tau}_0(\alpha^+,\alpha^-)$. Since $\alpha^+$ or $\alpha^-$ is simple, $\mc{C}_0$ does not have any multiply covered components except possibly for fibers. By the ECH index argument in part (a), $\mc{C}_0$ does not contain any fibers. Thus the pair $(\tau,\mc{C}_0)\in \mathcal{N}$.
\end{proof}

\begin{definition}
\label{def:N}
In the situation of Lemma~\ref{lem:cms}(a), if $I(Z)=0$ and $\mc{A}(Z)\le\epsilon$, denote the mod 2 cardinality of $\M^J_0(\alpha^+,\alpha^-,Z)$ by
\[
N^J(H_+,H_-,\alpha^+,\alpha^-,Z)\in\Z/2.
\]
In the situation of Lemma~\ref{lem:cms}(b), denote the mod 2 cardinality of $\M_0^J(\alpha^+,\alpha^-,Z)$ by
\[
N^{J_+,J_-}(H_+,H_-,\alpha^+,\alpha^-,Z)\in\Z/2.
\]
\end{definition}

We come now to the key lemma.

\begin{lemma}
\label{lem:key}
Let $H_1,H_2,H_3:Y_\phi\to\R$ be Hamiltonians\footnote{We continue to assume that $(H_i)_t$ is locally constant near $\partial\Sigma$ for each $t\in S^1$ and that $\phi_{H_i}$ is nondegenerate.} with $H_1<H_2<H_3$.
Let $\alpha^1$ and $\alpha^3$ be simple
orbit sets for $\phi_{H_1}$ and $\phi_{H_3}$ respectively. Let $J_i\in\mc{J}(\phi_{H_i})$ for $i=1,2,3$, and assume that $J_2$ is regular. Let $\epsilon>0$ and assume that $\alpha^i$ is $\epsilon$-isolated for $J_i$ for $i=1,3$. Let $Z\in H_2(W_{H_3,H_1},\alpha^3,\alpha^1)$ with $I(Z)=0$ and $\mc{A}(Z)\le\epsilon$. Let $J^-\in\mc{J}(\phi,H_2,H_1)$ and $J^+\in\mc{J}(\phi,H_3,H_2)$ be regular extensions of $J_i$. Then
\begin{gather}
\label{eqn:key}
N^{J_3,J_1}(H_3,H_1,\alpha^3,\alpha^1,Z)
= 
\quad\quad\quad\quad\quad\quad\quad\quad\quad\quad
 \quad\quad\quad\quad\quad\quad\quad\quad\quad\quad
\\
\nonumber
 \sum_{\substack{\mbox{\scriptsize $\alpha^2$ {\rm simple orbit set for} $\phi_{H_2}$}\\Z_-+Z_+=Z\\ I(Z_\pm)=0}}  N^{J_-}(H_2,H_1,\alpha^2,\alpha^1,Z_-) \cdot N^{J_+}(H_3,H_2,\alpha^3,\alpha^2,Z_+) \in \Z/2.
\end{gather}
\end{lemma}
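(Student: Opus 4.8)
The plan is to prove \eqref{eqn:key} by the usual "gluing plus neck-stretching compactness" argument for composing cobordism maps, specialized to the moduli spaces of index-zero unions of cylinders introduced in Definition~\ref{def:M0}. Concretely, interpolate between the composite cobordism and the two-story cobordism: choose a monotone homotopy $H_{s}$ from $H_1$ to $H_3$ that factors through $H_2$, and a family of almost complex structures $J^{R}\in\mc{J}(\phi,H_3,H_1)$, $R\in[R_0,\infty)$, obtained by inserting a neck of length $R$ around the level where $H_s=H_2$; for $R$ large, $J^R$ is the "pre-glued" almost complex structure built from $J^-$ and $J^+$, and for $R=R_0$ it is a fixed regular extension of $J_1,J_3$. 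Form the parametrized moduli space $\mathcal{N}=\coprod_{R\in[R_0,\infty]}\{R\}\times\mc{M}^{J^R}_0(\alpha^3,\alpha^1,Z)$, compactified by adding a fiber over $R=\infty$.

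First I would run the compactness argument of Lemma~\ref{lem:cms} uniformly in $R$. The action bound $\mc{A}(Z)\le\epsilon$ together with Lemmas~\ref{lem:actionfiltration} and \ref{lem:stokesnew} and the $\epsilon$-isolation of $\alpha^1,\alpha^3$ forces any broken limit to have \emph{no} nontrivial levels over the symplectization ends $\R\times Y_{\phi_{H_1}}$ and $\R\times Y_{\phi_{H_3}}$, exactly as in part (a); and the ECH index additivity \eqref{eqn:additive} plus Proposition~\ref{prop:fundamental} forces all surviving cobordism levels to have $I=0$, hence (by Proposition~\ref{prop:ie2new} and regularity) to be embedded index-zero unions of cylinders with no fibers. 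The only new phenomenon compared to Lemma~\ref{lem:cms} is breaking along the neck, i.e.\ along an orbit set $\alpha^2$ for $\phi_{H_2}$ as $R\to\infty$: such a limit consists of a pair $\mc{C}_-\in\mc{M}^{J_-}_0(\alpha^2,\alpha^1,Z_-)$ and $\mc{C}_+\in\mc{M}^{J_+}_0(\alpha^3,\alpha^2,Z_+)$ with $Z_-+Z_+=Z$, $I(Z_\pm)=0$ by additivity, and $\mc{A}(Z_\pm)\le\epsilon$ by monotonicity of the action along the neck. This identifies the fiber of $\overline{\mathcal{N}}$ over $R=\infty$ with the index set of the right-hand sum of \eqref{eqn:key}; here I would invoke Lemma~\ref{lem:cms}(a) (with hypotheses (i) resp.\ (ii)) to know each factor $N^{J_-},N^{J_+}$ is a finite mod~$2$ count, and the simplicity of $\alpha^1,\alpha^3$ to rule out multiply covered non-fiber components.

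Next I would establish that $\overline{\mathcal{N}}$ is a compact one-manifold with boundary, the boundary being the disjoint union of $\mc{M}^{J^{R_0}}_0(\alpha^3,\alpha^1,Z)$ at $R=R_0$ and the gluing fiber at $R=\infty$. The $R=R_0$ boundary is transversely cut out by choosing $J^{R_0}$ generic; interior transversality of the family $\{J^R\}$ is a standard argument as in Lemma~\ref{lem:cms}(b); and the $R=\infty$ boundary requires a gluing theorem: every transversely cut out pair $(\mc{C}_-,\mc{C}_+)$ as above arises as the limit of a unique (mod finite ambiguity absorbed mod~$2$) one-parameter family of glued curves in $\mc{M}^{J^R}_0(\alpha^3,\alpha^1,Z)$ for $R\gg 0$. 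For index-zero cylinders asymptotic to nondegenerate orbit sets this is the now-classical linear gluing construction; since all orbit sets are simple and the curves are embedded cylinders with no fibers, there are no obstructed or multiply covered gluing problems and no orbifold subtleties, so the gluing map is a homeomorphism onto a half-open collar. Counting boundary points of a compact one-manifold mod~$2$ then gives $\#\partial\overline{\mathcal{N}}=0$, i.e.\ $N^{J_3,J_1}(H_3,H_1,\alpha^3,\alpha^1,Z)$ equals the mod~$2$ count of the $R=\infty$ fiber, which is precisely the right-hand side of \eqref{eqn:key}. Finally, I would note that by Lemma~\ref{lem:cms}(b) the left-hand side is independent of the regular extension, so it is legitimate to compute it using $J^{R_0}$.

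I expect the main obstacle to be the gluing step at $R=\infty$, i.e.\ verifying that the ends of $\overline{\mathcal{N}}$ near a broken configuration $(\mc{C}_-,\mc{C}_+)$ are in exact bijection with glued curves and that no glued curve escapes the class "union of index-zero cylinders, no multiply covered components, no fibers". The compactness side is essentially a rerun of Lemma~\ref{lem:cms} with the action/index bookkeeping done level by level, and the neck-stretching interpolation is formal; but writing the gluing argument carefully — identifying the obstruction spaces as trivial given regularity, handling the possibility of several cylinders sharing a common asymptotic orbit in $\alpha^2$ (where the relevant ECH partition data from Lemma~\ref{lem:partitionsdisjoint} and Definition~\ref{def:pc} may be needed to control how the ends match up), and checking the count is unaffected mod~$2$ by automorphisms — is where the real work lies. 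One clean way to sidestep writing gluing from scratch is to phrase $N^{J_\pm}$ and $N^{J_3,J_1}$ as cobordism maps on a suitable filtered complex and cite the standard "composition of cobordism maps" theorem in PFH/ECH; but since the paper deliberately avoids the Floer-theoretic packaging, I would instead give the direct argument above, citing \cite{ir} and the ECH foundational literature for the gluing analysis.
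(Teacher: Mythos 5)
Your overall architecture (neck stretching from the composite cobordism, action bookkeeping with Lemmas~\ref{lem:actionfiltration} and \ref{lem:stokesnew} plus $\epsilon$-isolation to kill symplectization levels, ECH index additivity and Propositions~\ref{prop:fundamental}, \ref{prop:ie2new} to kill middle levels and fibers, then a mod~2 boundary count of a one-manifold) is the same as the paper's. But there is a genuine gap at the decisive step: you identify the $R=\infty$ fiber with pairs $(\mc{C}_-,\mc{C}_+)$ broken along a \emph{simple} orbit set $\alpha^2$, and you justify the collar/uniqueness statement in your gluing step by asserting ``since all orbit sets are simple \dots there are no obstructed or multiply covered gluing problems.'' Nothing in the compactness argument rules out breaking along a non-simple $\alpha^2=\{(\alpha_i,m_i)\}$ with some $m_i>1$: simplicity of $\alpha^1,\alpha^3$ only prevents multiply covered \emph{components} of $\mc{C}_\pm$; their intermediate asymptotics can still hit covers of $\alpha_i$ with total multiplicity $m_i>1$. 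So your asserted bijection between ends of $\mathcal{N}$ and the index set of the right-hand side of \eqref{eqn:key} is false as stated, and the restriction of the sum to simple $\alpha^2$ --- which the paper flags as the key content of the lemma --- is exactly what remains unproved. You do gesture at this (``several cylinders sharing a common asymptotic orbit \dots is where the real work lies''), but you neither supply the mechanism nor recognize that it changes the structure of the argument from a bijection to a parity count.

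The missing idea is the paper's computation of $\left|f^{-1}(C_-,C_+)\right|=\prod_i g_i$ for the map $f$ from ends of $\mathcal{N}$ to broken pairs, over \emph{all} orbit sets $\alpha^2$, simple or not. Two inputs are needed. First, because each broken pair is a limit of unions of cylinders, the positive and negative partitions $p^+_{\alpha_i}(m_i)$ and $p^-_{\alpha_i}(m_i)$ (forced by equality in Proposition~\ref{prop:ie2new}) must coincide, and by Lemma~\ref{lem:partitionsdisjoint} this is impossible for an elliptic $\alpha_i$ with $m_i>1$; so elliptic orbits appear only with multiplicity one. Second, for a positive hyperbolic $\alpha_i$ with multiplicity $m_i$ there are $m_i!$ ways to match the ends in the gluing, and for a negative hyperbolic $\alpha_i$ there are $2^k k!$ ways with $k=\floor{m_i/2}$ (matchings of double-covered ends times the choice of sheet identification); each such gluing produces a distinct end of $\mathcal{N}$, so $g_i$ is even whenever $m_i>1$. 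Hence broken configurations with non-simple $\alpha^2$ are hit by an evenly many ends and cancel mod~2, while simple $\alpha^2$ contribute exactly once, yielding \eqref{eqn:key}. Without this parity argument (or an equivalent one), your proof establishes at best the analogue of \eqref{eqn:key} with the sum taken over all orbit sets $\alpha^2$, which is not the statement needed for the main theorem.
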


\begin{remark}
Here is why the sum on the right hand side of \eqref{eqn:key} is finite. Since $\phi_{H_2}$ is nondegenerate, there are only finitely many simple (or non-simple) orbit sets $\alpha^2$ for $\phi_{H_2}$ with $[\alpha^2]=[\alpha^1]=[\alpha^3]$. For each such $\alpha^2$, given a nonzero term on the right hand side, we have $\mc{A}(Z_-)+\mc{A}(Z_+)=\mc{A}(Z)\le\epsilon$, and we have $\mc{A}(Z_\pm)\ge 0$ by Lemma~\ref{lem:stokesnew}, so $\mc{A}(Z_\pm)\le \epsilon$. Consequently the contribution to the right hand side of \eqref{eqn:key} from $\alpha^2$ is the mod 2 count of a subset of
\[
\mc{M}_0^{J_-,\epsilon}(\alpha^2,\alpha^1) \times \mc{M}_0^{J_+,\epsilon}(\alpha^3,\alpha^2).
\]
By Lemma~\ref{lem:cms}(a), each of the two factors in the above set is finite.
\end{remark}

\begin{remark}
A key part of the assertion of Lemma~\ref{lem:key} is that the orbit sets $\alpha^2$ in \eqref{eqn:key} are simple.
\end{remark}

\begin{proof}[Proof of Lemma~\ref{lem:key}.]
We will use a ``neck stretching'' argument. To set this up, recall from Remark~\ref{rem:extending} that there exists $s_0>0$ such that $J^-$ agrees with $g_{H_1}^*J_1$ when $s\le -s_0$ and with $g_{H_2}^*J_2$ when $s\ge s_0$, while $J^+$ agrees with $g_{H_2}^*J_2$ when $s\le -s_0$ and with $g_{H_3}^*J_3$ when $s\ge s_0$.

For $\tau\ge 0$, define an almost complex structure $J^\tau\in\mathcal{J}(\phi,H_3,H_1)$ as follows. Let $T_\tau:\R\times Y_\phi\to\R\times Y_\phi$ denote the translation sending $(s,y)\mapsto (s+\tau,y)$. We then define
\[
J^\tau = \left\{\begin{array}{cl} (T_{-2s_0-\tau})^*J^+, & s \ge -s_0-\tau,\\
(T_{2s_0+\tau})^*J^-, & s\le s_0+\tau.
\end{array}\right.
\]
Note that this definition is consistent because $J^\tau$ agrees with $g_{H_2}^*J_2$ when $-s_0-\tau\le s \le s_0+\tau$.

Now define
\[
\mathcal{N} = \coprod_{\tau\in[0,\infty)}\{\tau\}\times\M^{J^\tau}_0(\alpha^3,\alpha^1,Z).
\]
After a perturbation of the family $\{J^\tau\}$ to obtain transversality, which we omit from the notation, $\mathcal{N}$ is a one-dimensional manifold with boundary
\[
\partial \mathcal{N} = -\mc{M}^{J^0}_0(\alpha^3,\alpha^1,Z).
\]

As in the proof of Lemma~\ref{lem:cms}(b), the manifold $\mathcal{N}$ only has ends when $\tau\to\infty$. By Gromov compactness, as in the proof of Lemma~\ref{lem:cms}(a), each end converges to a broken holomorphic current consisting of:
\begin{description}
\item{(i)} A (possibly empty) tuple of holomorphic currents in $\mc{M}^{J_1}(\cdot,\cdot,\cdot)/\R$.
\item{(ii)} A holomorphic current in $\mc{M}^{J_-}(\cdot,\cdot,\cdot)$.
\item{(iii)} A (possibly empty) tuple of holomorphic currents in $\mc{M}^{J_2}(\cdot,\cdot,\cdot)/\R$.
\item{(iv)} A holomorphic current in $\mc{M}^{J_+}(\cdot,\cdot,\cdot)$.
\item{(v)} A (possibly empty) tuple of holomorphic currents in $\mc{M}^{J_3}(\cdot,\cdot,\cdot)/\R$.
\end{description}
Here the currents in (i), (iii), and (v) are not sums of trivial cylinders. Moreover, each component of each current is a cylinder, except that fiber components are a priori possible when $\Sigma\simeq S^2$.

As in the proof of Lemma~\ref{lem:cms}(a), for action reasons the broken holomorphic current contains no holomorphic currents of the form (i) or (v); and then by additivity of the ECH index and Proposition~\ref{prop:fundamental} there are no holomorphic currents of the form (iii), and the currents in (ii) and (iv) do not contain any fibers. As a result, we have a well-defined map
\begin{equation}
\label{eqn:ends}
f:\op{Ends}(\mathcal{N}) \longrightarrow \coprod_{\substack{\mbox{\scriptsize $\alpha^2$ {\rm orbit set for} $\phi_{H_2}$}\\Z_-+Z_+=Z\\ I(Z_\pm)=0}}  \mc{M}^{J_-}_0(\alpha^2,\alpha^1,Z_-) \times \mc{M}^{J_+}_0(\alpha^3,\alpha^2,Z_+).
\end{equation}

To complete the proof of \eqref{eqn:key}, we now show that for each element $(C_-,C_+)$ of the right hand side of \eqref{eqn:ends}, the number of inverse images $|f^{-1}(C_-,C_+)|$ is odd if the associated orbit set $\alpha^2$ is simple, and even otherwise. More specifically, if we write $\alpha^2=\{(\alpha_i,m_i)\}$, then we claim that
\begin{equation}
\label{eqn:countends}
\left|f^{-1}(C_-,C_+)\right|=\prod_i g_i
\end{equation}
where:
\begin{itemize}
\item
If $\alpha_i$ is elliptic, then $g_i=1$ if $m_i=1$, and $g_i=0$ if $m_i>1$.
\item
If $\alpha_i$ is positive hyperbolic, then $g_i=m_i!$.
\item
If $\alpha_i$ is negative hyperbolic, then $g_i=2^k k!$ where $k=\floor{m_i/2}$.
\end{itemize}

Recall that for each $i$, the curve $C_-$ has positive ends asymptotic to covers of $\alpha_i$ with total multiplicity $m_i$, while $C_+$ has negative ends asymptotic to covers of $\alpha_i$ with total multiplicity $m_i$. By Proposition~\ref{prop:ie2new}, the multiplicities of these covers must be given by the positive partition $p_{\alpha_i}^+(m_i)$ and the negative partition $p_{\alpha_i}^-(m_i)$, respectively.

If $(C_-,C_+)$ is in the image of the map \eqref{eqn:ends}, then it is a limit of unions of cylinders, so the two partitions $p_{\alpha_i}^+(m_i)$ and $p_{\alpha_i}^-(m_i)$ must be the same. It then follows from Lemma~\ref{lem:partitionsdisjoint} that $m_i=1$ if $\alpha_i$ is elliptic.

By the above, to prove \eqref{eqn:countends}, we can now assume that $m_i=1$ whenever $\alpha_i$ is elliptic. In this case \eqref{eqn:countends} follows from a standard gluing argument, cf.\ \cite[Prop.\ 11.2.9]{ad}. The reason for the factors $g_i$ is as follows. If $\alpha_i$ is positive hyperbolic, then by \eqref{eqn:php}, the curve $C_-$ has $m_i$ positive ends asymptotic to $\alpha_i$, while the curve $C_+$ has $m_i$ negative ends asymptotic to $\alpha_i$, and there are $m_i!$ ways to match these. If $\alpha_i$ is negative hyperbolic, then by \eqref{eqn:nhp}, the curve $C_-$ has $k$ positive ends asymptotic to the double cover of $\alpha_i$, while the curve $C_+$ has $k$ negative ends asymptotic to the double cover of $\alpha_i$, and there are $k!$ ways to match these. In addition, for each pair of ends that is matched, there are two ways to identify the sheets of the double cover of $\alpha_i$ when gluing. \end{proof}

\begin{remark}
The fact that there are an even number of ways to glue along a hyperbolic orbit with multiplicity greater than $1$, when the ECH partition conditions are satisfied, is one of the reasons for the condition on PFH (and ECH) generators in Remark~\ref{rem:generators}; see e.g.\ \cite[\S1.5]{obg1} or \cite[\S5.4]{bn}. Related considerations are important in the definition of symplectic field theory \cite{egh}, where ``bad'' Reeb orbits must be discarded; see e.g.\ \cite{bm} and \cite[\S11.1]{wendl}.
\end{remark}

\subsection{Conclusion}

\begin{proof}[Proof of Theorem~\ref{thm:main}.]
Let $\phi$ be a boundary-admissible area-preserving diffeomorphism of $\Sigma$, and let $\alpha$ be a nondegenerate simple orbit set for $\phi$. We need to find $\delta>0$ such that if $H:Y_\phi\to\R$ is a smooth function such that
\begin{description}
\item{(*)}
$H_t$ is locally constant near $\partial \Sigma$ for each $t$ and $\phi_H$ is nondegenerate and $\|H\|<\delta$
\end{description}
then there is a simple orbit set $\beta$ for $\phi_H$ such that $f_H(\alpha)$ is isotopic to $\beta$ through braids in $Y_{\phi_H}$.

Let $d$ denote the degree of $\alpha$; see Definition~\ref{def:orbitset}. Choose any $J\in\mc{J}(\phi)$; see Definition~\ref{def:Jphi}. By Lemma~\ref{lem:isolation}, we can choose $\epsilon>0$ such that $\alpha$ is strongly $\epsilon$-isolated for $J$. We claim that $\delta=\epsilon/d$ fulfills the requirements above.

To prove the claim, by Definition~\ref{def:isolation}(b), we can assume without loss of generality that $\phi$ is nondegenerate and $J$ is regular (see Definition~\ref{def:regular}) and $\alpha$ is $\epsilon$-isolated for $J$. Suppose $H: Y_\phi\to\R$ satisfies (*) above. We can assume without loss of generality, by adding a constant to $H$ if necessary, that $H$ takes values in $(0,\delta)$.

In the context of Definition~\ref{def:N}, we consider the constant Hamiltonians $H_-=0$ and $H_+=\delta$, so that $\phi_{H_-}=\phi_{H_+}=\phi$. Let $\alpha^+=\alpha^-=\alpha$, and let
\begin{equation}
\label{eqn:Ztriv}
Z=0\in H_2(Y_\phi,\alpha,\alpha).
\end{equation}
Note that the ECH index $I(Z)=0$; see \S\ref{sec:ECHindex}. Also, by \eqref{eqn:adcob}, the action $\mc{A}(Z)=\epsilon$.

We claim that
\begin{equation}
\label{eqn:N1}
N^{J,J}(\delta,0,\alpha,\alpha,Z)=1.
\end{equation}
Here the left hand side of \eqref{eqn:N1} is well-defined by Lemma~\ref{lem:cms}(b). To prove \eqref{eqn:N1}, observe that since $J\in\mc{J}(\phi)$ is regular, it follows that $J$ is also a regular element of $\mc{J}(\phi,\delta,0)$; see Definition~\ref{def:Jcobnew}. By Lemma~\ref{lem:actionfiltration},
\[
\mc{M}^J_0(\alpha,\alpha,Z) = \{\R\times\alpha\}.
\]
This set has cardinality $1$, which proves \eqref{eqn:N1}.

We now apply Lemma~\ref{lem:key} with $H_1=0$, $H_2=H$, and $H_3=\delta$, taking $\alpha^1=\alpha^3=\alpha$ and $Z$ as in \eqref{eqn:Ztriv}, and setting $J_1=J_3=J$. Here we can use any regular almost complex structures $J_2$ and $J_\pm$. By equations \eqref{eqn:key} and \eqref{eqn:N1}, there exists a simple orbit set $\alpha^2$ for $H$ such that $\mc{M}^{J_-}(\alpha^2,\alpha)\times \mc{M}^{J_+}(\alpha,\alpha^2)$ contains a pair $(C_-,C_+)$. Then $\alpha^2$ is the simple orbit set $\beta$ we are seeking. Either $C_-$ or $C_+$ (regarded as a movie of curves in $Y_\phi$ parametrized by $\R$, and then mapped to $Y_{\phi_H}$ by $f_H$) gives the required isotopy of braids, because $C_\pm$ is embedded in $\R\times Y_\phi$ by Proposition~\ref{prop:ie2new}, and transverse to the fibers of $\R\times Y_\phi\to \R\times S^1$ by Lemma~\ref{lem:carlemannew}(a).
\end{proof}


\end{document}